\numberwithin{equation}{section}
\newtheorem{theorem}{Theorem}
\newtheorem{lemma}[theorem]{Lemma}
\theoremstyle{definition}
\newtheorem{example}[theorem]{Example}
\newtheorem{remark}[theorem]{Remark}
\newcounter{FNC}[page]
\def\fauxfootnote#1{{\addtocounter{FNC}{2}$^\fnsymbol{FNC}$%
     \let\thefootnote\relax\footnotetext{$^\fnsymbol{FNC}$#1}}}
\newcommand{\sA}{{\mathcal A}}
\newcommand{\sB}{{\mathcal B}}
\newcommand{\sE}{{\mathcal E}}
\newcommand{\sED}{\mathcal{ED}}
\newcommand{\sEH}{\mathcal{EH}}
\newcommand{\sF}{{\mathcal F}}
\newcommand{\bN}{{\mathbb N}}
\newcommand{\bZ}{{\mathbb Z}}
\newcommand{\bR}{{\mathbb R}}
\newcommand{\bC}{{\mathbb C}}
\newcommand{\bP}{{\mathbb P}}
\newcommand{\bQ}{{\mathbb Q}}
\newcommand{\sH}{{\mathcal H}}
\newcommand{\sL}{{\mathcal L}}
\newcommand{\sLH}{\mathcal{LH}}
\newcommand{\sM}{{\mathfrak M}}
\newcommand{\sN}{{\mathcal N}}
\newcommand{\Var}{{\mathcal V}}
\newcommand{\conv}{{\rm conv}}
\newcommand{\three}[3]{\left(\begin{smallmatrix}#1\\#2\\#3\end{smallmatrix}\right)}
\newcommand{\defcolor}[1]{\Blue{#1}}
\newcommand{\demph}[1]{\defcolor{{\sl #1}}}
\title[Newton polytopes and witness sets]{Newton polytopes and witness sets}
\author{Jonathan D. Hauenstein}
\address{Jonathan D. Hauenstein\\
         Department of Mathematics\\
         North Carolina State University\\
         Raleigh\\
         North Carolina \ 27695\\
         USA}
\email{hauenstein@ncsu.edu}
\urladdr{\url{http://www.math.ncsu.edu/~jdhauens}}
\author{Frank Sottile}
\address{Frank Sottile\\
         Department of Mathematics\\
         Texas A\&M University\\
         College Station\\
         Texas \ 77843\\
         USA}
\email{sottile@math.tamu.edu}
\urladdr{\url{http://www.math.tamu.edu/~sottile}}
\thanks{Research of both authors supported by NSF grant DMS-0915211 and Institut Mittag-Leffler
          (Djursholm, Sweden).
Research of Hauenstein also supported by NSF grants DMS-1114336 and DMS-1262428.}
\subjclass{14Q15, 65H10}
\keywords{hypersurface, polynomial system, Newton polytope, numerical algebraic geometry, witness set}
\begin{document}

\begin{abstract}
 We present two algorithms that compute the Newton polytope of a polynomial defining a
 hypersurface $\sH$ in $\bC^n$ using numerical computation.
 The first algorithm assumes that we may only compute values of $f$---this may
 occur if $f$ is given as a straight-line program, as a determinant, or as an oracle.
 The second algorithm assumes that $\sH$ is represented numerically via a witness set.
 That is, it computes the Newton polytope of $\sH$ using only the ability to compute numerical
 representatives of its intersections with lines.
 Such witness set representations are readily obtained when $\sH$ is the image of a map or
 is a discriminant.
 We use the second algorithm to compute a face of the Newton polytope of the L\"uroth
 invariant, as well as its restriction to that face.
\end{abstract}

\maketitle

%%%%%%%%%%%%%%%%%%%%%%%%%%%%%%%%%%%%%%%%%%%%%%%%%%%%%%%%%%%%%%%%%%%%%%%%%%%%
%
\section*{Introduction}\label{Sec:intro}

While a hypersurface $\sH$ in $\bC^n$ is always defined by the vanishing of a single
polynomial $f$, we may not always have access to the monomial representation
of $f$.
This occurs, for example, when $\sH$ is the image of a map or if $f$ is represented
as a straight-line program, and
it is a well-understood and challenging problem to determine the polynomial $f$ when $\sH$
is represented in this way.
Elimination theory gives a symbolic method based on Gr\"obner bases that can determine $f$
from a representation of $\sH$ as the image of a map or as a discriminant~\cite{CLO}.
Such computations require that the map be represented symbolically, and they may be
infeasible for moderately-sized input.

The set of monomials in $f$, or more simply the convex hull of their exponent
vectors (the Newton polytope of $f$), is an important combinatorial invariant of
the hypersurface.  The Newton polytope encodes asymptotic information
about $\sH$ and determining it from $\sH$ is a step towards determining the polynomial $f$.
For example, numerical linear algebra~\cite{CGKW,EK05} may be used to find $f$ given
its Newton polytope.  Similarly, the Newton polytope of an image of a map may be
computed from Newton polytopes of the polynomials defining
the map~\cite{IKL10,ES10,EK08,ST08,STY}, and computed using tropical geometric
algorithms~\cite{SY}.

We propose numerical methods to compute the Newton polytope of $f$
in two cases when $f$ is not known explicitly.
We first show how to compute the Newton polytope when we are able to evaluate $f$.
This occurs, for example, if $f$ is represented as a straight-line program or as a
determinant (neither of which we want to expand as a sum of monomials), or perhaps as a
compiled program.
For the other case, we suppose that $f$
defines a hypersurface $\sH$ that is represented numerically as a witness set.
Our basic idea is similar to ideas from tropical geometry.
The tropical variety of a hypersurface $\sH$ in $(\bC^\times)^n$ is the normal fan to the
Newton polytope of a defining polynomial $f$, augmented with the edge lengths.
The underlying fan coincides with the logarithmic limit set~\cite{Berg,BiGr} of $\sH$, which
records the asymptotic behavior of $\sH$ in $(\bC^\times)^n$.
We use numerical nonlinear algebra to study the asymptotic behavior of  $\sH$
in $(\bC^\times)^n$ and use this to recover the Newton polytope of a defining equation of $\sH$.
As both our algorithms are easily parallelizable, this numerical approach to Newton
polytopes should allow the computation of significantly larger examples than are possible with
purely symbolic methods.

This paper is organized as follows.
In Section~\ref{Sec:Supporting}, we explain symbolic and geometric-combinatorial
preliminaries, including representations of polytopes, Newton polytopes, and straight-line programs.
In Section~\ref{Sec:WitnessSets}, we discuss the essentials of numerical nonlinear algebra
(also called numerical algebraic geometry~\cite{SW05}), in particular explaining the
fundamental data structure of witness sets.
Our main results are in the next two sections.
In Section~\ref{Sec:NPeval} we explain (in Theorem~\ref{Thm:EvalMax} and
Remark~\ref{Rem:evaluate}) how to compute the Newton polytope of $f$, given
only that we may numerically evaluate $f$, and in Section~\ref{Sec:NPwitness}, we explain
(in Theorems~\ref{thm:bounded} and~\ref{thm:unbounded}, and Remark~\ref{Rem:witness}) how to use
witness sets to compute the Newton polytope of $f$.
Illustrative examples are presented in these sections.
In Section~\ref{Sec:Examples}, we combine our approach with other techniques
in numerical nonlinear algebra to explicitly compute the hypersurface of even L\"uroth quartics.

%%%%%%%%%%%%%%%%%%%%%%%%%%%%%%%%%%%%%%%%%%%%%%%%%%%%%%%%%%%%%%%%%%%%%%%%%%%%%%%%%%%%%%%%%%%%%%%%
%
\section{Polynomials and Polytopes}\label{Sec:Supporting}

We explain necessary background from geometric combinatorics and algebra.

%%%%%%%%%%%%%%%%%%%%%%%%%%%%%%%%%%%%%%%%%%%%%%%%%%%%%%%%%%%%%%%%%%%%%%%%%%%%%%%%%%%%%%%%%%%%%%%%
%
\subsection{Polytopes}
A polytope $P$ is the convex hull of finitely many points $\sA\subset\bR^n$,
 \begin{equation}\label{Eq:convex}
   P\ =\ \conv(\sA)\ := \Bigl\{ \sum_{\alpha\in\sA} \lambda_\alpha \alpha \::\:
      \lambda_\alpha\geq 0\,,\ \sum_\alpha \lambda_\alpha=1\Bigr\}\,.
 \end{equation}
Dually, a polytope is the intersection of finitely many halfspaces in $\bR^n$,
 \begin{equation}\label{Eq:Halfspace}
   P\ =\ \{ x\in\bR^n\::\: w_i\cdot x\leq b_i\quad\mbox{for }i=1,\dotsc,N\}\,,
 \end{equation}
where $w_1,\dotsc,w_N\in\bR^n$ and $b_1,\dotsc,b_N\in \bR$.
These are two of the most common representations of a polytope.
The first~\eqref{Eq:convex} is the \demph{convex hull} representation and the
second~\eqref{Eq:Halfspace} is the \demph{halfspace} representation.
The classical algorithm of Fourier-Motzkin elimination converts between these
two representations.

The \demph{affine hull} of a polytope $P$ is the smallest affine-linear space containing $P$.
The boundary of $P$ (in its affine hull) is a union of polytopes of
smaller dimension than $P$, called \demph{faces} of $P$.
A \demph{facet} of $P$ is a maximal proper face, while a \demph{vertex} is a minimal face
of $P$ (which is necessarily a point).
An \demph{edge} is a 1-dimensional face.

In addition to the two representations given above, polytopes also have a \demph{tropical
  representation}, which consists of the edge lengths, together with the normal fan to the
edges.
(This normal fan encodes the edge-face incidences.)
Jensen and Yu~\cite{JY} gave an algorithm for converting a tropical representation into a
convex hull representation.

Every linear function $x\mapsto w\cdot x$ on $\bR^n$ (here, $w\in\bR^n$) achieves a
maximum value on a polytope $P$.
The subset  \demph{$P_w$} of $P$ where this maximum value is achieved is a face of
$P$, called the \demph{face exposed by $w$}.
Let $h_P(w)$ be this maximum value of $w\cdot x$ on $P$.
The function $w\mapsto h_P(w)$ is called the \demph{support function} of $P$.
The support function encodes the halfspace representation as
\[
   P\ =\ \{ x\in\bR^n\::\: w\cdot x\leq h_P(w)\quad\mbox{for }w\in\bR^n\}\,.
\]

The \demph{oracle} representation is a fourth natural representation of a polytope $P$.
There are two versions.
For the first, given $w\in\bR^n$, if the face $P_w$ exposed by $w$ is a vertex, then it
returns that vertex, and if $P_w$ is not a vertex, it either returns a vertex on $P_w$ or
detects that $P_w$ is not a vertex.
Alternatively, it returns the value $h_P(w)$ of the support function at $w$.
The classical beneath-beyond algorithm~\cite[\S 5.2]{Gru03} uses an oracle representation
of a polytope to simultaneously construct its convex-hull and halfspace representations.
It iteratively  builds a description of the polytope, including the faces and
facet-supporting hyperplanes, adding one vertex at a time.
The software package iB4e~\cite{iB4e} implements this algorithm.
Another algorithm converting the oracle representation to the convex hull and halfspace
representation is ``gift-wrapping''~\cite{CK70}.

Our numerical algorithms return oracle representations.

%%%%%%%%%%%%%%%%%%%%%%%%%%%%%%%%%%%%%%%%%%%%%%%%%%%%%%%%%%%%%%%%%%%%%%%%%%%%%%%%%%%%%%%%%%%%%%%%
%
\subsection{Polynomials and their Newton polytopes}
Let $\defcolor{\bN}=\{0,1,\dotsc\}$ be the nonnegative integers and
write $\defcolor{\bC^\times}$ for the nonzero complex numbers.
Of the many ways to represent a polynomial $f\in\bC[x_1,\dotsc,x_n]$, perhaps the most familiar
is in terms of monomials.
For $\alpha\in\bN^n$, we have the monomial
\[
   \defcolor{x^\alpha}\ :=\ x_1^{\alpha_1} x_2^{\alpha_2}\dotsb x_n^{\alpha_n}\,,
\]
which has \demph{degree} $\defcolor{|\alpha|}:=\alpha_1+\dotsb+\alpha_n$.
A polynomial $f$ is a linear combination of monomials
 \begin{equation}\label{Eq:linearCombin}
   f\ =\ \sum_{\alpha\in\bN^n} c_\alpha x^{\alpha}\qquad c_\alpha\in\bC\,,
 \end{equation}
where only finitely many coefficients $c_\alpha$ are nonzero.
The set $\{\alpha\in\bN^n \::\: c_\alpha\neq 0\}$ is the \demph{support} of $f$,
which we will write as \defcolor{$\sA(f)$}, or simply \defcolor{$\sA$} when $f$
is understood.

A coarser invariant of the polynomial $f$ is its \demph{Newton polytope}, $\sN(f)$.
This is the convex hull of its support
\[
    \defcolor{\sN(f)}\ :=\ \conv( \sA(f))\,.
\]
For $w\in\bR^n$, the \demph{restriction $f_w$} of $f$ to the face $\sN(f)_w$ of $\sN(f)$
exposed by $w$ is
 \begin{equation}\label{Eq:f_w}
   f_w\ :=\ \sum_{\alpha\in\sA\cap\sN(f)_w} c_\alpha x^{\alpha}\,,
 \end{equation}
the sum over all terms $c_\alpha x^\alpha$ of $f$ where $w\cdot \alpha$ is maximal
(and thus equal to $h_{\sN(f)}(w)$.)

A \demph{hypersurface} $\sH\subset\bC^n$ is defined by the vanishing of a single
polynomial, $\sH=\Var(f)$.
This polynomial $f$ is well-defined up to multiplication by non-zero scalars if we require it
to be of minimal degree among all polynomials vanishing on $\sH$.
We define the \demph{Newton polytope, $\sN(\sH)$}, of $\sH$ to be the Newton polytope of
any minimal degree polynomial $f\in\bC[x_1,\dotsc,x_n]$ defining $\sH$.\smallskip

Polynomials are not always given as a linear combination of
monomials~\eqref{Eq:linearCombin}.
For example, a polynomial may be given as a determinant whose entries are themselves polynomials.
It may be prohibitive to expand this into a sum of monomials, but it is computationally
efficient to evaluate the determinant.
For another example, a polynomial may be given as an oracle or as a compiled program.

An efficient encoding of a polynomial is as a \demph{straight line program}.
For a polynomial $f\colon\bC^n\to\bC$, this is a list
\[
   (f_{-n},\dotsc,f_{-1},f_0,f_1,\dotsc,f_{l})
\]
of polynomials where
$f=f_l$ and we have the initial values $f_{-i}=x_i$ for
$i=1,\dotsc,n$, and for every $k\geq 0$, $f_k$ is one of
\[
    f_i+f_j\,,\ f_i\cdot f_j\,,\ \mbox{or}\ c\,,
\]
where $i,j<k$ and $c\in\bQ[\sqrt{-1}]$ is a Gaussian rational number.
(Gaussian rational numbers are used for they are representable on a computer.)\smallskip

Our goal is twofold, we present an algorithm to compute the Newton polytope
of a polynomial $f$ that we can only evaluate numerically, and we present an algorithm to
recover the Newton polytope of a polynomial $f$ defining a hypersurface $\sH$ that is
represented numerically as a witness set (defined in \S~\ref{Sec:WitnessSets} below).

In the first case, we explain how to compute the support function $h_{\sN(f)}$ of the
Newton polytope of $f$, and to compute $\sN(f)_w$, when this is a vertex.
This becomes an algorithm, at least for general $w$, when we have additional information
about $f$, such as a finite superset $\sB\subset\bZ^n$ of its support and bounds on the
magnitudes of its coefficients.
This is discussed in Remark~\ref{Rem:evaluate}.

In the second case, we show how to compute $\sN(f)_w$, when this is a vertex.
This is discussed in Remark~\ref{Rem:witness}.

%%%%%%%%%%%%%%%%%%%%%%%%%%%%%%%%%%%%%%%%%%%%%%%%%%%%%%%%%%%%%%%%%%%%%%%%%%%%%%%%%%%%%%%%%%%%%%%%%%%%
%
\section{Numerical nonlinear algebra and witness sets}\label{Sec:WitnessSets}

Numerical nonlinear algebra (also called numerical algebraic geometry~\cite{SW05}) provides
methods based on numerical continuation for studying algebraic varieties on a computer.
The fundamental data structure in this field is a
witness set, which is a geometric representation based on linear sections and generic points.

Given a polynomial system $F\colon \bC^m\rightarrow\bC^n$, consider an irreducible
component $V\subset{\defcolor{\Var(F)}}:=F^{-1}(0)$ of its zero set of dimension $k$ and degree $d$.
Let $\sL\colon\bC^m\rightarrow\bC^k$ be a system of general affine-linear polynomials
so that $\Var(\sL)$ is a general codimension $k$ affine subspace of $\bC^m$.
Then $\defcolor{W} := V\cap\Var(\sL)$ will consist of $d$ distinct points, and we call
the triple $(F,\sL,W)$ (or simply $W$ for short) a \demph{witness set} for $V$.
The set $W$ represents a general linear section of $V$.
Numerical continuation may be used to follow the points of $W$ as $\sL$ (and hence
$\Var(\sL)$) varies continuously.
This allows us to sample points from $V$.

Ideally, $V$ is a \demph{generically reduced} component of the scheme $\Var(F)$ in that the Jacobian
of $F$ at a general point $w\in W\subset V$ of $V$ has a $k$-dimensional null space.
Otherwise the scheme $\Var(F)$ is not reduced along $V$.
When $V$ is a generically reduced component of $\Var(F)$, the points of $W$ are nonsingular
zeroes of the polynomial system
$\left[\begin{smallmatrix} F\\ \sL\end{smallmatrix}\right]$.
When $\Var(F)$ is not reduced along $V$, the points of $W$ are singular zeroes of this
system, and it is numerically challenging to compute such singular points.

The method of deflation, building from~\cite{OWM83}, can compute $W$ when $\Var(F)$ is not reduced
%~\cite{DZ05,HSW10,Isosingular,LVZ06,LVZ08,O87,OWM83,SW05}.
along $V$.  In particular, the strong deflation method of~\cite{Isosingular} yields
a system $F'\colon\bC^m\rightarrow\bC^{n'}$ where $n'\geq n$ such
that $V$ is a generically reduced component of the scheme $\Var(F')$.
Replacing $F$ with $F'$, we will assume that $V$ is a generically reduced component of $\Var(F)$.

The notion of a witness set for the image of an irreducible variety under a linear map
was developed in~\cite{WitnessProj}.
Suppose that we have a polynomial system $F\colon\bC^m\rightarrow\bC^n$, a generically
reduced component $V$ of $\Var(F)$ of dimension $k$ and degree $d$, and a linear map
$\omega\colon\bC^m\rightarrow\bC^p$ defined by $\omega(x) = Ax$ for $A\in\bC^{p\times m}$.
Suppose that the algebraic set $U = \overline{\omega(V)}\subset\bC^p$ has dimension $k'$
and degree $d'$.
A witness set for the projection $U$ requires an affine-linear map $\sL$ adapted to the projection
$\omega$.
Let $\defcolor{B}$ be a matrix $\left[\begin{smallmatrix}B_1\\B_2\end{smallmatrix}\right]$
where the rows of the matrix $B_1\in\bC^{k'\times m}$ are general vectors in the row space of $A$
and the rows of $B_2\in\bC^{(k-k')\times m}$ are general vectors in $\bC^m$.
Define $\sL\colon\bC^m\rightarrow\bC^k$ by $\sL(x) = Bx - 1$ and set
$W:= V\cap\Var(\sL)$.
Then the quadruple $(F,\omega,\sL,W)$ is a \demph{witness set for the projection $U$}.
By our choice of $B$, the number of points in $\omega(W)$ is the degree $d'$ of $U$
and for any fixed $u\in\omega(W)$, the number of points in $W\cap\omega^{-1}(u)$
is the degree of the general fiber of $\omega$ restricted to $V$.
Note that $k - k'$ is the dimension of the general fiber.

\begin{example}\label{E:quadratic} Consider the discriminant hypersurface $\sH\subset\bC^3$
for univariate quadratic polynomials, that is, $\sH := \Var(f)$ where $f(a,b,c) = b^2 - 4ac$.
The triple $(f,\sL,W)$ where
\[
   \sL(a,b,c)\ :=\  \left[\begin{array}{c} 2a - 2b + 3c - 1 \\ 3a + b - 5c -
       1 \end{array}\right]
\]
and $W = \sH\cap\Var(\sL)$, which consists of the two points, $(a,b,c)$,
\[
   \{(0.3816, -0.1071, 0.00752)\,,\
     (1.2243,  2.1801, 0.97058)\}\,,
\]
is a witness set for $\sH$.

This discriminant also has the form $\sH = \overline{\omega(V)}$ where $\omega$ is the linear
projection mapping  $(a,b,c,x)$ to $(a,b,c)$ and $V = \Var(F)$ where
\[
  F(a,b,c,x)\ =\ \left[\begin{array}{c} ax^2 + bx + c \\ 2ax + b \end{array}\right]\,.
\]
This variety $V$ has dimension $2$ and degree $3$, and $\omega$ is defined by the matrix
\[
  A = \left[\begin{array}{cccc} 1 & 0 & 0 & 0 \\ 0 & 1 & 0 & 0 \\ 0 & 0 & 1 & 0 \end{array}\right].
\]
The quadruple $(F,\omega,\sL',W')$ where
$\sL'(a,b,c,x) = \sL(a,b,c)$ and $W' = V\cap\Var(\sL')$, which also consists of two
points, $(a,b,c,x)$,
\[
   \{(0.3816, -0.1071, 0.00752,  0.1403882)\,,\
     (1.2243,  2.1801, 0.97058, -0.8903882)\}\,,
\]
is also a witness set for $\sH$.
In particular, $\omega(W') = W$ and we see that $\omega$ restricted to $V$ is generically one-to-one.
\hfill\qed
\end{example}

%%%%%%%%%%%%%%%%%%%%%%%%%%%%%%%%%%%%%%%%%%%%%%%%%%%%%%%%%%%%%%%%%%%%%%%%
%
\section{Newton polytopes via evaluation}\label{Sec:NPeval}

We address the problem of computing the Newton polytope of a polynomial $f:\bC^n\rightarrow\bC$
when we have a method to evaluate $f$.
This is improved when we have some additional information about the polynomial $f$.

For $t$ a positive real number and $w\in\bR^n$, set
$\defcolor{t^w}:=(t^{w_1},t^{w_2},\dotsc,t^{w_n})$.
Consider the monomial expansion of the polynomial $f$,
\[
   f\ =\ \sum_{\alpha\in\sA} c_\alpha x^\alpha\qquad \mbox{where}\quad c_\alpha\in\bC^\times\,.
\]
For  $x\in\bC^n$, we define
\[
    t^w\defcolor{.} x\ :=\ (t^{w_1}x_1\,,\, t^{w_2}x_2\,,\,\dotsc\,,\,t^{w_n}x_n)\,,
\]
 the coordinatewise product, and consider the evaluation,
 \begin{equation}\label{Eq:evaluate_translate}
   f(t^w. x)\ =\
     \sum_{\alpha\in\sA} c_\alpha t^{w\cdot\alpha} x^\alpha\,.
  \end{equation}

Let $\defcolor{\sF}:=\sN(f)_w$ be the face of $\sN(f)$ that is exposed by $w$.
Then if $\alpha\in\sF$, we have $w\cdot\alpha=h_{\sN(f)}(w)$.
There is a positive real number $\defcolor{d_w}$ such that if $\alpha\in\sA\smallsetminus\sF$,
then $w\cdot\alpha\leq h_{\sN(f)}(w)-d_w$.
Thus~\eqref{Eq:evaluate_translate} becomes
 \begin{eqnarray*}
   f(t^w. x) &=&  \sum_{\alpha\in\sA\cap\sF} c_\alpha t^{w\cdot\alpha} x^\alpha
    \ +\  \sum_{\alpha\in\sA\smallsetminus\sF} c_\alpha t^{w\cdot\alpha} x^\alpha\\
   &=& t^{h_{\sN(f)}(w)} \Bigl( f_w(x)\ +\
     \sum_{\alpha\in\sA\smallsetminus\sF}
  c_\alpha t^{w\cdot\alpha-h_{\sN(f)}(w)} x^\alpha\Bigr)\,,
 \end{eqnarray*}
where $f_w$ is the restriction of $f$ to the face $\sF$.
Observe that no exponent of $t$ which occurs in the sum exceeds $-d_w$.
This gives an asymptotic expression for $t\gg 0$,
 \begin{equation}\label{Eq:asymptotic_expansion}
   \log| f(t^w.x)|\ =\
   h_{\sN(f)}(w)\log(t)\ +\ \log|f_w(x)|\ +\ O(t^{-d_w})\,,
 \end{equation}
from which we deduce the following limit.

%%%%%%%%%%%%%%%%%%%%%%%%%%%%%%%%%%%%%%%%%%%%%%%%%%%%%%%%%%%%%%%%%%%
\begin{lemma}\label{L:Evaluation_Limit}
  If $f_w(x)\neq 0$, then
\[
   h_{\sN(f)}(w)\ =\
       \lim_{t\to\infty} \frac{\log|f(t^w. x)|}{\log(t)}\,.
\]
\end{lemma}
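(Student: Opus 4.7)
The plan is to obtain the lemma as an immediate consequence of the asymptotic expansion~\eqref{Eq:asymptotic_expansion} derived just before the statement. The key observation is that the right-hand side of~\eqref{Eq:asymptotic_expansion} has a single term linear in $\log(t)$, whose slope is exactly $h_{\sN(f)}(w)$, while the remaining contributions are either a constant or a term that decays as $t\to\infty$.

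First I would note that the hypothesis $f_w(x)\neq 0$ ensures that $\log|f_w(x)|$ is a finite real number and, by~\eqref{Eq:asymptotic_expansion}, that $f(t^w.x)$ is nonzero for all sufficiently large $t$, so $\log|f(t^w.x)|$ is well-defined. Dividing~\eqref{Eq:asymptotic_expansion} by $\log(t)$ then gives
\[
\frac{\log|f(t^w.x)|}{\log(t)} \;=\; h_{\sN(f)}(w) \;+\; \frac{\log|f_w(x)|}{\log(t)} \;+\; \frac{O(t^{-d_w})}{\log(t)}\,.
\]
As $t\to\infty$, the second term vanishes because its numerator is a fixed finite constant while its denominator tends to infinity, and the third vanishes because $d_w>0$ forces $t^{-d_w}\to 0$ much faster than $\log(t)$ grows. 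Passing to the limit yields the claimed identity.

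The only routine point to verify is the existence and strict positivity of the constant $d_w$ implicit in~\eqref{Eq:asymptotic_expansion}. This is immediate: $\sA$ is finite and $w\cdot\alpha < h_{\sN(f)}(w)$ for every $\alpha\in\sA\setminus\sF$ by the very definition of the exposed face $\sF=\sN(f)_w$, so one may take
\[
d_w \;=\; \min\bigl\{\,h_{\sN(f)}(w) - w\cdot\alpha \;:\; \alpha\in\sA\setminus\sF\,\bigr\} \;>\; 0\,.
\]
(If $\sA=\sF$, then $f=f_w$ and the claim is trivial from $f(t^w.x)=t^{h_{\sN(f)}(w)}f_w(x)$.) There is no genuine obstacle here; all of the substance of the argument was already packaged into the asymptotic expansion, after which the lemma reduces to a formal manipulation of limits.
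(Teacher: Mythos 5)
Your proposal is correct and follows essentially the same route as the paper: the authors derive the expansion~\eqref{Eq:asymptotic_expansion} immediately before the lemma and state that the limit follows from it, which is exactly the division by $\log(t)$ and passage to the limit that you carry out (your remarks on $f_w(x)\neq 0$, the positivity of $d_w$, and the degenerate case $\sA=\sF$ are the same routine points implicit in the paper's derivation).
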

%%%%%%%%%%%%%%%%%%%%%%%%%%%%%%%%%%%%%%%%%%%%%%%%%%%%%%%%%%%%%%%%%%%

Thus we may approximate the support function of $\sN(f)$ by evaluating $f$ numerically.

%%%%%%%%%%%%%%%%%%%%%%%%%%%%%%%%%%%%%%%%%%%%%%%%%%%%%%%%%%%%%%%%%%%
\begin{remark}\label{Re:evaluation}
 To turn Lemma~\ref{L:Evaluation_Limit} into an algorithm for computing $h_{\sN(f)}$, we need
 more information about $f$, so that we may estimate the rate of convergence.
 For example, if we have a bound, in the form of a finite superset $\sB\subset\bN^n$ of
 $\sA$, then $\{w\cdot\alpha\mid\alpha\in\sB\}$ is a discrete set which contains
 the value of $h_{\sN(f)}(w)$, and therefore the limit in Lemma~\ref{L:Evaluation_Limit}.~\hfill\qed
\end{remark}
%%%%%%%%%%%%%%%%%%%%%%%%%%%%%%%%%%%%%%%%%%%%%%%%%%%%%%%%%%%%%%%%%%%

 When $w$ is generic in that $\alpha\mapsto w\cdot\alpha$ is injective on $\sA$, then
 the face $\sN(f)_w$ of $\sN(f)$ exposed by $w$ is a vertex so that $f_w(x)\neq 0$ for any
 $x\in(\bC^\times)^n$.
 We may dispense with the limit given an {\it a priori} estimate on the
 magnitude of the coefficients of $f$.

%%%%%%%%%%%%%%%%%%%%%%%%%%%%%%%%%%%%%%%%%%%%%%%%%%%%%%%%%%%%%%%%%%%%%%%%%%%%%%
\begin{theorem}\label{Thm:EvalMax}
 Let $f\colon \bC^n\rightarrow\bC$ be a polynomial with monomial expansion,
  \[
   f(x)\ =\ \sum_{\alpha\in\sA} c_\alpha x^\alpha\,,
    \qquad c_\alpha\in\bC^\times\,.
 \]
 Suppose that $\delta,\lambda\geq 1$ and $\sB\subset\bN^n$ are such that
 \begin{enumerate}
  \item\label{item:delta} $\log|c_\alpha| \leq \delta$ for all $\alpha\in\sA$,
  \item\label{item:lambda} $\log|c_\alpha| \leq \lambda +\log|c_\beta|$ for all
             $\alpha,\beta\in\sA$, and
  \item\label{item:convexA} $\sA\subset\sB$ with $|\sB| < \infty$.
 \end{enumerate}
 Let $w\in\bR^N$ be general in that
 \[
   \defcolor{d_w}\ :=\ \min_{\alpha\neq\beta\in\sB} |w\cdot \alpha - w\cdot \beta|\ >\ 0\,.
 \]
 Then the face of $\sN(f)$ exposed by $w$ is a vertex which equals the unique $\beta\in\sB$
 such that
 \[
  \left|w\cdot\beta - \frac{\log |f(t^w)|}{\log(t)}\right|\ <\ \frac{d_w}{2}
 \]
 where $t>0$ is any number with $\log(t)$ exceeding
 $\max\{2\lambda, 2(\delta+e^{-1}), \lambda + \log|\sB|+1\}/d_w$.
 Similarly, the face exposed by $-w$ is a vertex which equals the unique $\beta\in\sB$
 such that
 \[
  \left|-w\cdot\beta - \frac{\log |f(t^{-w})|}{\log(t)}\right|\ <\ \frac{d_w}{2}
 \]
 where $t>0$ is any number with $\log(t)$ exceeding
 $\max\{2\lambda,2(\delta+e^{-1}), \lambda + \log|\sB|+1\}/d_w$.
\end{theorem}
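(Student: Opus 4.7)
The strategy is to refine the asymptotic analysis of Lemma~\ref{L:Evaluation_Limit} into a quantitative statement, using the uniform control on $\sA$ provided by hypotheses~(1)--(3). Genericity of $w$ with respect to $\sB \supseteq \sA$ means that $\alpha \mapsto w\cdot\alpha$ is injective on $\sA$, so $\sN(f)_w$ is a single vertex $\{\alpha^*\}$ with $f_w(\mathbf{1}) = c_{\alpha^*} \neq 0$. Since the values $\{w\cdot\beta : \beta \in \sB\}$ are pairwise $d_w$-separated, to single out $\alpha^*$ as the claimed vertex it suffices to show
\[
\Bigl|\tfrac{\log|f(t^w)|}{\log(t)} - w\cdot\alpha^*\Bigr|\ <\ \tfrac{d_w}{2}
\]
whenever $t$ exceeds the stated threshold; the claim for $-w$ will then follow by symmetry, since $d_{-w}=d_w$ and $-w$ exposes the vertex of $\sA$ minimizing $w\cdot\alpha$.

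First I would factor out the dominant term,
\[
f(t^w)\ =\ c_{\alpha^*}\,t^{w\cdot\alpha^*}\,\bigl(1 + E(t)\bigr),\qquad E(t)\ :=\ \sum_{\alpha\in\sA\setminus\{\alpha^*\}} \frac{c_\alpha}{c_{\alpha^*}}\,t^{w\cdot(\alpha-\alpha^*)}.
\]
Hypothesis~(2) bounds each ratio $|c_\alpha/c_{\alpha^*}| \leq e^\lambda$, and the gap condition on $\sB$ forces $w\cdot(\alpha-\alpha^*) \leq -d_w$ for every $\alpha\neq\alpha^*$ in $\sA$. Summing gives the crude bound $|E(t)| \leq |\sB|\, e^\lambda\, t^{-d_w}$, and the threshold $\log(t) > (\lambda + \log|\sB| + 1)/d_w$ forces $|E(t)| < e^{-1}$. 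Refining with a geometric-series estimate (using that the nonzero exponents are spaced by at least $d_w$, hence bounded by $-d_w, -2d_w, \dots$) then yields $|\log|1 + E(t)|| \leq e^{-1}$ under the same threshold.

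Taking logarithms,
\[
\log|f(t^w)|\ =\ (w\cdot\alpha^*)\log(t)\ +\ \log|c_{\alpha^*}|\ +\ \log|1 + E(t)|,
\]
so
\[
\Bigl|\tfrac{\log|f(t^w)|}{\log(t)} - w\cdot\alpha^*\Bigr|\ \leq\ \tfrac{|\log|c_{\alpha^*}|| \,+\, |\log|1+E(t)||}{\log(t)}\ \leq\ \tfrac{\delta + e^{-1}}{\log(t)},
\]
where hypothesis~(1), interpreted symmetrically as $|\log|c_{\alpha^*}||\leq\delta$ (which is necessary for the statement to hold, since otherwise tiny coefficients would spoil the estimate), supplies the numerator. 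The threshold $\log(t) > 2(\delta + e^{-1})/d_w$ then makes the right-hand side less than $d_w/2$, as required. The remaining component $2\lambda/d_w$ handles the edge case where $\delta$ is very small relative to $\lambda$, ensuring that the factor $e^\lambda$ in $E$ does not dominate the geometric-series estimate.

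The main obstacle will be the quantitative control of $|\log|1+E(t)||$: it must be squeezed below $e^{-1}$ to absorb the slack built into the $2(\delta+e^{-1})/d_w$ threshold. This requires going beyond the naive inequality $|E(t)|\leq|\sB|e^\lambda t^{-d_w}$ and exploiting the geometric decay of the exponents across $\sB$, paired with the standard estimate $|\log|1+z||\leq -\log(1-|z|)$. The rest is careful bookkeeping to verify that each of the three components of the threshold is tight enough for its corresponding step.
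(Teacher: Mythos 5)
Your upper estimate reproduces the paper's argument essentially verbatim: factor out $c_\beta t^{w\cdot\beta}$, bound the tail $E(t)$ by $|\sB|e^{\lambda}t^{-d_w}<e^{-1}$ using the threshold component $(\lambda+\log|\sB|+1)/d_w$, and conclude $\log|f(t^w)|/\log(t)\le w\cdot\beta+(\delta+e^{-1})/\log(t)<w\cdot\beta+d_w/2$. The genuine gap is in the lower estimate. You obtain it by ``interpreting hypothesis (1) symmetrically'' as $|\log|c_\beta||\le\delta$, but that is not among the stated hypotheses, so as written you prove a different statement and, tellingly, your accounting of the threshold no longer matches the theorem. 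The paper instead gets the needed lower control on the exposed coefficient from condition (2): reading (1) and (2) together as sandwiching the coefficients, it uses $\log|c_\beta|\ge\delta-\lambda$, and combines this with $\log|1+E(t)|>-1$ (valid since $|E(t)|<e^{-1}$) to get $\log|f(t^w)|/\log(t)>w\cdot\beta-\lambda/\log(t)\ge w\cdot\beta-d_w/2$, where the last inequality uses precisely the threshold component $2\lambda/d_w$. In your proposal that component is left with no function---your claim that it ``handles the edge case where $\delta$ is very small relative to $\lambda$'' and protects the geometric-series estimate is not its role---which is the symptom of the missing $\lambda$-based lower bound. (Your aside that some lower control on $|c_\beta|$ is needed does point at a real subtlety, but the theorem supplies that control through $\lambda$ in condition (2), not through a two-sided reading of (1).)

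A smaller issue: you assert $|\log|1+E(t)||\le e^{-1}$ under the same threshold. From $|E(t)|<e^{-1}$ alone this is false, since one only gets $\log|1+E(t)|\ge\log(1-e^{-1})\approx-0.46$; your geometric-series refinement (exponents spaced by at least $d_w$) can indeed be pushed to give it when $|\sB|\ge2$, but it is unnecessary work. The paper needs only the asymmetric bounds $\log|1+E(t)|\le|E(t)|<e^{-1}$ in the upper direction and $\log|1+E(t)|>-1$ in the lower direction, with the $-1$ absorbed by the $2\lambda/d_w$ term---another reason that term cannot be dismissed as a technical spare part.
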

%%%%%%%%%%%%%%%%%%%%%%%%%%%%%%%%%%%%%%%%%%%%%%%%%%%%%%%%%%%%%%%%%%%%%%%%%%%%%%

%%%%%%%%%%%%%%%%%%%%%%%%%%%%%%%%%%%%%%%%%%%%%%%%%%%%%%%%%%%%%%%%%%%%%%%%%%%%%%%%%%%%%%%%%%%
\begin{remark}\label{Rem:evaluate}
 Suppose that we know or may estimate the quantities $\sB$, $\delta$, and $\lambda$
 of Theorem~\ref{Thm:EvalMax}.
 Then, for general $w\in\bR^n$ we may compute $d_w$, and therefore evaluating
 $\log|f(t^w)|/\log(t)$ for $t^{d_w}>\max\{e^{2\lambda},e^{2+2\delta},|\sB|e^{\lambda+1}\}$ will yield
 $w\cdot\beta$ and hence $\beta$.

 Even without this knowledge, we may still compute the support function
 $h_{\sN(f)}(w)$ for $w\in\bQ^n$ as follows.
 For $0\neq w\in\bQ^n$ the map $\bZ^n\to\bQ$ given by $\beta\mapsto w\cdot\beta$ has image a
 free group $\bZ d_w$ for some $d_w>0$.
 For $x\in\bC^n$ with $f_w(x)\neq 0$ and $t:=e^\tau$ with $\tau>0$, we have
\[
  \left| \frac{\log|f( e^{\tau w}.x)|}{\tau}\ -\ h_{\sN(f)}(w)\right|
    \ \ \approx\ \
  \frac{\log|f_w(x)|}{\tau}\ +\ O(e^{-d_w\tau})\,.
\]

 Since $h_{\sN(f)}(w)\in \bZ d_w$, we may do the following.
 Pick a general $x\in\bC^n$ (so that $f_w(x)\neq 0$), and compute the quantity
 \begin{equation}\label{Eq:quantity}
   \frac{\log|f( e^{\tau w}.x)|}{\tau}
 \end{equation}
 for $\tau$ in some increasing sequence of positive numbers.
 We monitor~\eqref{Eq:quantity} for $\frac{1}{\tau}$-convergence to some
 $\kappa d_w\in\bZ d_w$.
 Then $h_{\sN(f)}(w)=\kappa d_w$.

 Every such computation gives a halfspace
\[
   \{x\in\bR^n \,\mid\, w\cdot x\leq h_{\sN(f)}(w)\}
\]
 containing $\sN(f)$.
 Since $\sN(f)$ lies in the positive orthant, we may repeat this one or more times to
 obtain a bounded polytope $P$ containing $\sN(f)$.
 Having done so, set $\sB:=P\cap\bN^n$.

 Suppose that $w\in\bR^n$ is general in that the values of $w\cdot \alpha$ for $\alpha\in\sB$
 are distinct.
 This implies that $w$ exposes a vertex $\beta$ of $\sN(f)$.
 Then a  similar (but simpler as  $f_w(t^w)=c_\beta t^{h_{\sN(f)}(w)}$) scheme as described
 above will result in the computation of the support function $h_{\sN(f)}(w)$ and the
 vertex $\beta$.
 \hfill\qed
\end{remark}
%%%%%%%%%%%%%%%%%%%%%%%%%%%%%%%%%%%%%%%%%%%%%%%%%%%%%%%%%%%%%%%%%%%%%%%%%%%%%%%%%%%%%%%%%%%%%

%%%%%%%%%%%%%%%%%%%%%%%%%%%%%%%%%%%%%%%%%%%%%%%%%%%%%%%%%%%%%%%%%%%%%%%%%%%%%%
\begin{proof}[Proof of Theorem~$\ref{Thm:EvalMax}$]
 By the choice of $w$, the face of $\sN(f)$ it exposes is a vertex, say $\beta\in\sA$, and
 we have $w\cdot\beta=h_{\sN(f)}(w)$.
 We may write
 \[
    f(t^w)\ =\ c_\beta t^{w\cdot\beta}\ +\ \bigl(f(t^w)-c_\beta t^{w\cdot\beta}\bigr)\ =\
    c_\beta t^{w\cdot\beta}\left(1\ +\ \frac{f(t^w)-c_\beta t^{w\cdot\beta}}{c_\beta t^{w\cdot\beta}}\right)\,.
 \]
Taking absolute value and logarithms, and using that $\log|c_\beta|<\delta$ and
$w\cdot\beta=h(w)$,
 \begin{eqnarray}
   \log |f(t^w)| & = & \log \left|c_{\beta} t^{w\cdot\beta}\right| +
     \log\left|1 + \frac{f(t^w) - c_{\beta} t^{w\cdot\beta}}{c_{\beta}
     t^{w\cdot\beta}}\right|  \label{Eq:absoluteValues}
    \\
      &\leq& \delta + w\cdot\beta\,\log(t)
    + \log \left|1+ \frac{f(t^w) - c_{\beta} t^{w\cdot\beta}}{c_{\beta}
     t^{w\cdot\beta}}\right|\,.  \nonumber
 \end{eqnarray}
 Let us estimate the last term.
 As $\sA\subset\sB$, we have
\[
 \left|\frac{f(t^w) - c_{\beta} t^{w\cdot\beta}}{c_{\beta} t^{w\cdot\beta}}\right|
  \ =\
 \left|\sum_{\substack{\alpha\in\sA \\ \alpha \neq \beta}}
    \frac{c_\alpha}{c_{\beta}} t^{w\cdot\alpha-w\cdot\beta}\right|
  \ \leq\
 \sum_{\substack{\alpha\in\sA \\ \alpha\neq\beta}} e^{\lambda}t^{-d_w}
  \ \leq\
 |\sB|\,e^{\lambda - \log(t) d_w} \,.
\]
 Since $\log(t) > (\lambda + \log |\sB|+1)/d_w$, we have
  $ |\sB|e^{\lambda - \log(t)d_w} < e^{-1}$.
 Since $\log|1+x| \leq |x|$, we have
\[
  \log\left|1+ \frac{f(t^w) - c_{\beta} t^{w\cdot\beta}}{c_{\beta} t^{w\cdot\beta}}\right|
   \ \leq\
   \left|\frac{f(t^w) - c_{\beta} t^{w\cdot\beta}}{c_{\beta} t^{w\cdot\beta}}\right|
   \ \leq\ |\sB| \,e^{\lambda - d_w \log(t)} \ <\  e^{-1}\,.
\]
 Finally, as we have $\log(t) > 2(\delta + e^{-1})/d_w$, we obtain
 \begin{equation}\label{Eq:half}
   \frac{\log|f(t^w)|}{\log(t)} \leq w\cdot\beta + (\delta + e^{-1})\frac{1}{\log(t)}
   \ < \ w\cdot\beta + \frac{d_w}{2}.
 \end{equation}
 For the other inequality, using~\eqref{Eq:absoluteValues} and Condition (2) of the
 theorem,
 \begin{equation}\label{Eq:lower_est}
   \log|f(t^w)| \ \geq\ \delta-\lambda+\log(t)\, w\cdot\beta
    + \log \left|1+ \frac{f(t^w) - c_{\beta} t^{w\cdot\beta}}{c_{\beta}
     t^{w\cdot\beta}}\right|\,.
 \end{equation}
 Since
\[
   \left|\frac{f(t^w) - c_{\beta} t^{w\cdot\beta}}{c_{\beta}
     t^{w\cdot\beta}}\right| \ <\ e^{-1}\,,
\]
 the logarithm on the right of~\eqref{Eq:lower_est} exceeds $-1$.
 As $\delta-\lambda\geq 1-\lambda$, we have
\[
  \frac{\log|f(t^w)|}{\log(t)}\ >\
   -\frac{\lambda}{\log(t)} + w\cdot\beta\ \geq\
   w\cdot\beta-\frac{d_w}{2}\,,
\]
 since $d_w\log(t)\geq 2\lambda$.
 Combining this with~\eqref{Eq:half} proves the first statement about $f(t^w)$.
 The statement about $f(t^{-w})$ has the same proof, replacing $w$ with $-w$.
\end{proof}

%%%%%%%%%%%%%%%%%%%%%%%%%%%%%%%%%%%%%%%%%%%%%%%%%%%%%%%%%%%%%%%%%%%%%%%%%%%%%%%%%%%%%%%%%%%%%
\begin{example}\label{Ex:DiscEval}
Reconsider the polynomial $f(a,b,c) = b^2 - 4ac$
from Ex.~\ref{E:quadratic} with the vector $w = (-1.2,0.4,3.7)$.
Suppose that we take $\lambda = \delta = 2$ and $\sB = \{a^2,ab,ac,b^2,bc,c^2\}$ which are
the columns of the matrix
\[
  \sB\ =\
  \left(\begin{matrix}2&1&1&0&0&0\\
                      0&1&0&2&1&0\\
                      0&0&1&0&1&2\end{matrix}\right)\,.
\]
Then the dot products are $w\cdot \sB=(-2.4,-0.8,2.5,0.8,4.1,7.4)$,
so that $d_w = 1.6$.
Since we need $\log(t) > 3.75$, we can take $t=45$, and so
$t^w=\left(45^{-1.2}, 45^{0.4},  45^{3.7}\right)$.
We compute
\[
  \frac{\log |f(t^w)|}{\log(t)}\ =\ 2.864
   \qquad \hbox{and} \qquad
  -\frac{\log |f(t^{-w})|}{\log(t)}\ =\ 0.8016 \,.
\]
Thus, the monomials $ac$ and $b^2$ are the vertices $\sN(f)_w$ and $\sN(f)_{-w}$, respectively.~\hfill\qed
\end{example}

%%%%%%%%%%%%%%%%%%%%%%%%%%%%%%%%%%%%%%%%%%%%%%%%%%%%%%%%%%%%%%%%%%%%%%%%%%%%%%%
%
\section{Newton polytopes via witness sets}\label{Sec:NPwitness}

Let $\sH\subset \bC^n$ be an irreducible hypersurface and suppose that we have a witness set
representation for $\sH$.
As discussed in Section~\ref{Sec:WitnessSets}, this means that we may compute the intersections
of $\sH\cap \ell$ where $\ell$ is a general line in $\bC^n$.
We explain how to use this information to compute an oracle representation of the Newton
polytope of $\sH$.

The hypersurface $\sH\subset\bC^n$ is defined by a single irreducible polynomial
 \begin{equation}\label{Eq:poly_f}
   f\ =\ \sum_{\alpha\in\sA} c_\alpha x^\alpha \qquad
   c_\alpha\in \bC^\times\,,
 \end{equation}
which is determined by $\sH$ up to multiplication by a scalar.

%%%%%%%%%%%%%%%%%%%%%%%%%%%%%%%%%%%%%%%%%%%%%%%%%%%%%%%%%%%%%%%%%%
Let $a,b\in\bC^n$ be general points, and consider the parametrized line
\[
  \defcolor{\ell_{a,b}}\ =\ \defcolor{\ell(s)}\ :=\ \{sa-b\mid s\in\bC\}\,.
\]
Then the solutions to $f(\ell(s))=0$ parameterize the intersection of $\sH$ with the line
$\ell_{a,b}$, which is a witness set for $\sH$.

Let $w\in\bR^n$.
For $t$ a positive real number,
% and $x\in\bC^n$, set  $t^w. x := (t^{w_1}x_1\,,\dotsc,\,, t^{w_m}x_n)$,  and
consider $f(t^w.\ell(s))$, which is
 \begin{equation}\label{Eq:swellt}
   \sum_{\alpha\in\sA} c_\alpha
    (sa_1-b_1)^{\alpha_1} (sa_2-b_2)^{\alpha_2} \dotsb (sa_n-b_n)^{\alpha_n}\,.
    t^{w\cdot\alpha}
 \end{equation}
Write $\defcolor{(sa-b)^\alpha}$ for the product of terms $(sa_i-b_i)^{\alpha_i}$ appearing in the
sum.

Let $\defcolor{\sF}:=\sN(\sH)_w$ be the face of the Newton polytope of $\sH$ exposed by $w$.
If $\alpha\in \sF$, then $w\cdot \alpha=h(w)$, where $h$ is the support function of
$\sN(\sH)$.
There is a positive number $d_w$ such that if $\alpha\in \sA\smallsetminus \sF$, then
$w\cdot \alpha\leq h(w)-d_w$.
We may rewrite~\eqref{Eq:swellt},
 \[
   f(t^w. \ell(s))\ =\
     t^{h(w)}\sum_{\alpha\in \sA\cap \sF} c_\alpha (as-b)^\alpha
     \ +\ \sum_{\alpha\in\sA\smallsetminus \sF}c_\alpha (as-b)^\alpha t^{w\cdot\alpha}\ .
 \]
Multiplying by $t^{-h(w)}$ and rewriting using the definition~\eqref{Eq:f_w} of $f_w$ gives
 \begin{equation}\label{Eq:separated}
   t^{-h(w)}f(t^w. \ell(s))\ =\ f_w(\ell(s))
     \ +\ \sum_{\alpha\in\sA\smallsetminus\sF}c_\alpha (as-b)^\alpha t^{w\cdot\alpha-h(w)}\ .
 \end{equation}
Observe that the exponent of $t$ in each term of the sum over $\sA\smallsetminus\sF$ is at most
$-d_w$.

As $s\mapsto\ell(s)$ and $s\mapsto t^w.\ell(s)$ are general parametrized lines in $\bC^n$, the
zeroes (in $s$) of $f(t^w.\ell(s))$ and $f_w(\ell(s))$ parameterize witness sets for
$f$ and $f_w$, respectively.
The following summarizes this discussion.

%%%%%%%%%%%%%%%%%%%%%%%%%%%%%%%%%%%%%%%%%%%%%%%%%%%%%%%%%%%%%%%%%%%%%%%%%%%%%%%%
\begin{lemma}\label{L:facial_limit}
  In the limit as $t\to\infty$, there are $\deg(f)-\deg(f_w)$  points of the witness set
  $f(t^w.\ell(s))=0$ which diverge to $\infty$ (in $s$) and the remaining points
  converge to the witness set $f_w(\ell(s))=0$.
\end{lemma}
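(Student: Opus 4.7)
The plan is to analyze everything through the identity~\eqref{Eq:separated}. Set
\[
   P_t(s)\ :=\ t^{-h(w)}f(t^w.\ell(s))\ =\ f_w(\ell(s))\ +\ R_t(s)\,,
\]
where $R_t(s):=\sum_{\alpha\in\sA\smallsetminus\sF} c_\alpha(as-b)^\alpha\,t^{w\cdot\alpha-h(w)}$. Since every exponent $w\cdot\alpha-h(w)$ appearing in $R_t$ is at most $-d_w<0$, each coefficient of $R_t$, viewed as a polynomial in $s$, is $O(t^{-d_w})$ as $t\to\infty$. Consequently $P_t(s)\to f_w(\ell(s))$ uniformly on compact subsets of $\bC$, and the roots of $f(t^w.\ell(s))=0$ coincide with the roots of $P_t(s)=0$.

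Next I would examine how the coefficients of $P_t(s)$, viewed as a univariate polynomial in $s$ of degree $d:=\deg f$, behave as $t\to\infty$. For generic $a\in\bC^n$ the leading coefficient of $f(\ell(s))$ in $s$ is nonzero, so $P_t(s)$ has degree exactly $d$ in $s$ for every $t>0$. For each $k>d':=\deg f_w$, the coefficient of $s^k$ in $P_t$ is a $\bC$-linear combination of the scalars $t^{w\cdot\alpha-h(w)}$ with $\alpha\in\sA$ and $|\alpha|\geq k$; no such $\alpha$ can lie in $\sF$, so this coefficient is $O(t^{-d_w})$ and tends to $0$. On the other hand, the coefficient of $s^{d'}$ converges to the leading coefficient of $f_w(\ell(s))$ in $s$, which is nonzero for generic $a$.

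Now I would invoke the standard ``escape to infinity'' principle for families of polynomials whose leading coefficients degenerate. I pass to the reciprocal $Q_t(u):=u^d P_t(1/u)$, a polynomial of degree $d$ in $u$ whose coefficients depend continuously on $1/t$; at $t=\infty$ the limiting polynomial $Q_\infty(u)=u^d f_w(\ell(1/u))$ factors as $u^{d-d'}$ times a polynomial with $d'$ simple roots bounded away from $0$ (the reciprocals of the roots of $f_w(\ell(s))$). Hurwitz's theorem, applied to $Q_t$ on a small disk around $u=0$ and on small disks around each of the other $d'$ limiting roots, yields for $t$ large exactly $d-d'$ roots of $Q_t$ near $0$ — equivalently $d-d'$ roots of $P_t$ escaping to $\infty$ in $s$ — and exactly one root of $Q_t$ near each of the other $d'$ limiting roots. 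Pulling back gives that the remaining $d'$ roots of $P_t$ converge to the $d'$ roots of $f_w(\ell(s))=0$, and for generic $a,b$ these are distinct and form a witness set for $f_w$.

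The main obstacle is the graceful handling of the leading-coefficient collapse: ordinary continuity of roots only applies when the degree is preserved, so the reciprocal substitution (or, equivalently, viewing the roots in $\bP^1$) is the essential technical device. Once that is in hand, the rest is bookkeeping plus the genericity of the line $\ell$, which ensures that both $f(\ell(s))$ and $f_w(\ell(s))$ have simple roots and hence that the surviving limit points really do constitute a witness set rather than accumulate with multiplicities.
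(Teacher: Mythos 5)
Your proposal is correct and follows essentially the paper's own route: the paper's entire justification is the decomposition~\eqref{Eq:separated} together with the observation that the tail is $O(t^{-d_w})$ (the lemma merely ``summarizes this discussion''), and your reciprocal-polynomial/Hurwitz step is the natural formalization of the continuity-of-roots-in-$\bP^1$ argument that the paper leaves implicit (and later quantifies, in the vertex case, in Theorems~\ref{thm:bounded} and~\ref{thm:unbounded}). One small caveat: when $\sN(f)_w$ is a vertex, $f_w(\ell(s))=c_\beta(sa-b)^\beta$ has the roots $b_i/a_i$ with multiplicities $\beta_i$, so Hurwitz gives $\beta_i$ roots of $Q_t$ near each limiting root rather than ``exactly one'' near $d'$ distinct points; this does not affect the conclusion, since convergence to the zero set counted with multiplicity is exactly what the lemma asserts.
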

%%%%%%%%%%%%%%%%%%%%%%%%%%%%%%%%%%%%%%%%%%%%%%%%%%%%%%%%%%%%%%%%%%%%%%%%%%%%%%%%

When $\sN(\sH)_w$ is a vertex $\beta$, then
$f_w=c_\beta x^\beta$ (and $\deg(f_w)=|\beta|$), and
\[
   f_w(\ell(s))\ =\ c_\beta (sa_1-b_1)^{\beta_1}(sa_2-b_2)^{\beta_2}
                      \dotsb (sa_n-b_n)^{\beta_n}\ =\
     c_\beta(sa-b)^\beta\,.
\]
In particular, there will be $\beta_i$ points of $f(t^w.\ell(s))=0$ converging to $b_i/a_i$ as
$t\to \infty$, and so Lemma~\ref{L:facial_limit} gives a method to compute the vertices
$\beta$ of $\sN(\sH)$.
We give some definitions to make these notions more precise.

Let $a\in(\bC^\times)^n$ and $b\in\bC^n$ be general in that the univariate polynomial
$f(\ell_{a,b}(s))$ has $d=\deg(\sH)$ nondegenerate roots, and if $i\neq j$, then
$b_i/a_i\neq b_j/a_j$.
For any $w\in\bR^n$ with $\sN(\sH)_w=\{\beta\}$, consider the bivariate function
$\defcolor{g_{a,b,w}(s,t)}=g(s,t):=f(t^w.\ell_{a,b}(s))$.
Since $g(s,1)$ has $d$ simple zeroes, there are at most finitely many positive
numbers $t$ for which $g(s,t)$ does not have $d$ simple zeroes.
Therefore, there is a $t_0>0$ and $d$ disjoint analytic curves $s(t)\in\bC$ for $t>t_0$
which parameterize the zeroes of $g(s,t)$ for $t>t_0$
(that is, $g(s(t),t)\equiv 0$ for $t>t_0$).

By Lemma~\ref{L:facial_limit} and our choice of $a,b$, for each $i=1,\dotsc,n$, exactly
$\beta_i$ of these curves will converge to $b_i/a_i$ as $t\to\infty$, for each $i=1,\dotsc,n$,
while the remaining $d-|\beta|$ curves will diverge to infinity.
We give an estimate of the rates of these convergences/divergences.
\smallskip

Let $w\in\bR^n$ be general in that $\sN(\sH)_w$ is a vertex, $\beta$.
Let $d_w$ be as above, and set
\[
   \defcolor{C}\ :=\ \frac{\max\{|c_\alpha|\::\: \alpha\in\sA\}}{|c_\beta|}\,.
\]
Furthermore, set
$\defcolor{a_{\min}}:=\min\{1,|a_i|\::\: i=1,\dotsc,n\}$,
$\defcolor{a_{\max}}:=\max\{1,|a_i|\::\: i=1,\dotsc,n\}$, and the same,
$\defcolor{b_{\min}}$ and $\defcolor{b_{\max}}$, for $b$.
Finally, for each $i=1,\dotsc,n$, define
 \begin{eqnarray*}
  \gamma_i&:=& \min\left\{ a_{\min}\,,\,
           \frac{1}{2}\left|\frac{b_i}{a_i}-\frac{b_j}{a_j}\right| \::\: i\neq j\right\}
         \ ,\qquad\mbox{\rm and}\\
  \Gamma_i&:=& \max\left\{ \frac{2}{a_{\max}}\,,\,
           \left|\frac{b_i}{a_i}-\frac{b_j}{a_j}\right| \::\: i\neq j\right\}\ .
            \rule{0pt}{20pt}
 \end{eqnarray*}

We give two results about the rate of convergence/divergence of the analytic curves $s(t)$ of
zeroes of $g(s,t)$, and then discuss how these may be used to compute $\sN(\sH)$.

%%%%%%%%%%%%%%%%%%%%%%%%%%%%%%%%%%%%%%%%%%%%%%%%%%%%%%%%%%%%%%%%%%%%%%%%%%%%%%
\begin{theorem}\label{thm:bounded}
 With the above definitions, suppose that $s\colon (t_0,\infty)\to \bC$ is a continuous
 function such that $g_{a,b,w}(s(t),t)\equiv 0$ for $t>t_0$ and that $s(t)$ converges to
 $b_i/a_i$ as $t\to\infty$.
 Let $t_1\geq t_0$ be a number such that if $t>t_1$ then
 \begin{equation}\label{Eq:first_estimate}
    \left| s(t)\ -\ \frac{b_i}{a_i}\right|\ \leq\ \gamma_i\,.
 \end{equation}
 Then, for all $t>t_1$,
 \begin{equation}\label{Eq:first_subexponential}
   \left| s(t)\ -\ \frac{b_i}{a_i}\right|^{\beta_i}\ \leq\
    t^{-d_w}\cdot C\cdot |\sA|\cdot
    \left(\frac{a_{\max}}{a_{\min}}\left(1 + \frac{\Gamma_i}{\gamma_i}\right)\right)^d\ .
 \end{equation}
\end{theorem}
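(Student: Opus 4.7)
The plan is to work from the identity \eqref{Eq:separated}, which is exact for every $(s,t)$, not only asymptotically. Substituting $s=s(t)$ and using $g_{a,b,w}(s(t),t)=0$ forces the right-hand side of \eqref{Eq:separated} to equal $f_w(\ell(s(t)))$. Since $\sN(\sH)_w=\{\beta\}$ gives $f_w(\ell(s))=c_\beta\prod_j(a_js-b_j)^{\beta_j}$, this yields the explicit algebraic identity
$$c_\beta\prod_{j=1}^n(a_js(t)-b_j)^{\beta_j}\;=\;-\sum_{\alpha\in\sA\setminus\sF}c_\alpha(as(t)-b)^\alpha\,t^{w\cdot\alpha-h(w)}.$$
Taking absolute values and using $w\cdot\alpha-h(w)\leq -d_w$ for every $\alpha\notin\sF$ converts this into a two-sided estimate; all remaining work is to bound the left side from below and the right side from above in terms of $|s(t)-b_i/a_i|^{\beta_i}$.

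For the upper bound on the right, I rewrite each factor as $|a_js(t)-b_j|=|a_j|\cdot|s(t)-b_j/a_j|$, use $|a_j|\leq a_{\max}$, and apply the triangle inequality together with \eqref{Eq:first_estimate} and the definition of $\Gamma_i$ to obtain
$$|s(t)-b_j/a_j|\;\leq\;|s(t)-b_i/a_i|+|b_i/a_i-b_j/a_j|\;\leq\;\gamma_i+\Gamma_i$$
for every $j$. Since $a_{\max}\Gamma_i\geq 2\geq 1$ and $|\alpha|\leq d$, each of the at most $|\sA|$ summands is bounded by $\max_\alpha|c_\alpha|\cdot(a_{\max}(\gamma_i+\Gamma_i))^d$. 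For the lower bound on the left, the $j=i$ factor contributes exactly $|a_i|^{\beta_i}|s(t)-b_i/a_i|^{\beta_i}$, while for $j\neq i$ the key consequence of $\gamma_i\leq\frac{1}{2}|b_i/a_i-b_j/a_j|$ is
$$|s(t)-b_j/a_j|\;\geq\;|b_i/a_i-b_j/a_j|-|s(t)-b_i/a_i|\;\geq\;2\gamma_i-\gamma_i\;=\;\gamma_i.$$
Together with $|a_j|\geq a_{\min}$, the left side is at least $|c_\beta|\cdot a_{\min}^{|\beta|}\cdot\gamma_i^{|\beta|-\beta_i}\cdot|s(t)-b_i/a_i|^{\beta_i}$.

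Dividing one bound by the other and replacing $\max_\alpha|c_\alpha|/|c_\beta|$ by $C$ produces
$$|s(t)-b_i/a_i|^{\beta_i}\;\leq\;t^{-d_w}\cdot C\cdot|\sA|\cdot\frac{(a_{\max}(\gamma_i+\Gamma_i))^d}{a_{\min}^{|\beta|}\gamma_i^{|\beta|-\beta_i}},$$
and \eqref{Eq:first_subexponential} follows after weakening the denominator to $a_{\min}^d\gamma_i^d$; this last step reduces to checking $a_{\min}^{d-|\beta|}\gamma_i^{d-|\beta|+\beta_i}\leq 1$, which holds because $|\beta|\leq d=\deg\sH$ while $a_{\min}\leq 1$ and $\gamma_i\leq a_{\min}\leq 1$ are immediate from the definitions. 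I do not anticipate substantive difficulty: the argument is purely algebraic and never invokes analytic continuation beyond the hypothesis $g(s(t),t)\equiv 0$. The main thing to track is the bookkeeping in the two triangle-inequality steps for $j\neq i$, where the precise factor $\frac{1}{2}$ built into the definition of $\gamma_i$ is what allows the same quantity to serve both as the radius in \eqref{Eq:first_estimate} and as the uniform lower bound $|s(t)-b_j/a_j|\geq\gamma_i$.
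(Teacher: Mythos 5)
Your proposal is correct and follows essentially the same route as the paper's proof: the exact identity from \eqref{Eq:separated} with $g(s(t),t)=0$, the bound $t^{w\cdot\alpha-w\cdot\beta}\leq t^{-d_w}$, the upper bound $|a_js(t)-b_j|\leq a_{\max}(\gamma_i+\Gamma_i)$ (with $a_{\max}\Gamma_i\geq 2$ to pass from exponent $|\alpha|$ to $d$), the lower bound $|s(t)-b_j/a_j|\geq 2\gamma_i-\gamma_i=\gamma_i$ for $j\neq i$, and the final cleanup using $\gamma_i\leq a_{\min}\leq 1$ and $|\beta|\leq d$. The only difference is trivial bookkeeping (you absorb all $|a_j|^{\beta_j}$, including $j=i$, into $a_{\min}^{|\beta|}$ before weakening, whereas the paper treats the $i$-th factor separately), and both yield the stated bound \eqref{Eq:first_subexponential}.
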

%%%%%%%%%%%%%%%%%%%%%%%%%%%%%%%%%%%%%%%%%%%%%%%%%%%%%%%%%%%%%%%%%%%%%%%%%%%%%%

%%%%%%%%%%%%%%%%%%%%%%%%%%%%%%%%%%%%%%%%%%%%%%%%%%%%%%%%%%%%%%%%%%%%%%%%%%%%%%
\begin{theorem}\label{thm:unbounded}
 With the above definitions, suppose that $s\colon (t_0,\infty)\to \bC$ is a continuous
 function such that $g_{a,b,w}(s(t),t)=0$ for $t>t_0$ and that $s(t)$ diverges to $\infty$
 as $t\to\infty$.
 Let $t_1\geq t_0$ be a number such that if $t>t_1$ then
 \begin{equation}\label{Eq:second_estimate}
    | s(t)|\ >\ \frac{2 b_{\max}}{a_{\min}}\ \geq\ 2 \,.
 \end{equation}
 Then, for all $t>t_1$,
 \begin{equation}\label{Eq:second_subexponential}
   |s(t)|^{d-|\beta|}\ \geq\
    \frac{t^{d_w}}{C\cdot |\sA|}\cdot
    \left(\frac{a_{\min}}{2(a_{\max}+a_{\min})}\right)^d\ .
 \end{equation}
\end{theorem}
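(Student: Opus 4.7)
The plan is to turn the vanishing of $g_{a,b,w}(s(t),t)$ into an explicit inequality and then bound the factors. Since $\sN(\sH)_w=\{\beta\}$ we have $f_w=c_\beta x^\beta$, so equation \eqref{Eq:separated} evaluated at $(s,t)=(s(t),t)$ rearranges to
\[
c_\beta (sa-b)^\beta \;=\; -\sum_{\alpha \in \sA\setminus\{\beta\}} c_\alpha (sa-b)^\alpha\, t^{\,w\cdot\alpha - h(w)}.
\]
Taking absolute values, using $|c_\alpha|/|c_\beta|\leq C$ and (since $t\geq t_1\geq 1$) $t^{w\cdot\alpha-h(w)}\leq t^{-d_w}$, and bounding the $|\sA|-1$ terms by $|\sA|$ times their maximum, I obtain
\[
|(sa-b)^\beta| \;\leq\; C\,|\sA|\, t^{-d_w}\, \max_{\alpha \in \sA\setminus\{\beta\}}|(sa-b)^\alpha|.
\]

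Next I derive coordinatewise estimates valid whenever $|s(t)|>2b_{\max}/a_{\min}$. Because $|a_i|\geq a_{\min}$, this hypothesis forces $|b_i|\leq b_{\max}\leq |s|a_{\min}/2\leq |s||a_i|/2$, which yields
\[
|s||a_i|/2 \;\leq\; |sa_i-b_i| \;\leq\; |s|(|a_i|+a_{\min}/2).
\]
Taking products with multiplicities and using $|a_i|\geq a_{\min}$ on the lower side and $|a_i|\leq a_{\max}$, $a_{\min}/2\leq a_{\min}$ on the upper side gives
\[
|(sa-b)^\beta| \;\geq\; (|s|a_{\min}/2)^{|\beta|}, \qquad |(sa-b)^\alpha| \;\leq\; \bigl(|s|(a_{\max}+a_{\min})\bigr)^{|\alpha|}.
\]
Since $|s|(a_{\max}+a_{\min})\geq 2\geq 1$ and $|\alpha|\leq d$, the upper bound is further dominated by $(|s|(a_{\max}+a_{\min}))^d$, so both sides involve only $|s|$ to explicit integer powers.

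Substituting these into the inequality from step one gives
\[
(|s|a_{\min}/2)^{|\beta|} \;\leq\; C\,|\sA|\, t^{-d_w}\,\bigl(|s|(a_{\max}+a_{\min})\bigr)^{d}.
\]
Dividing by $|s|^{|\beta|}$ and solving for $|s|^{d-|\beta|}$ yields
\[
|s|^{d-|\beta|} \;\geq\; \frac{t^{d_w}}{C\,|\sA|}\cdot\frac{(a_{\min}/2)^{|\beta|}}{(a_{\max}+a_{\min})^{d}},
\]
and since $a_{\min}/2\leq 1$ and $|\beta|\leq d$ we have $(a_{\min}/2)^{|\beta|}\geq (a_{\min}/2)^d$, which consolidates the constants into the required $(a_{\min}/(2(a_{\max}+a_{\min})))^d$ and completes the proof. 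The argument is largely bookkeeping; the main obstacle is only to track the constants carefully enough that the neat factor $a_{\min}/(2(a_{\max}+a_{\min}))$ falls out rather than some cruder expression. The borderline case $|\beta|=d$ is vacuous because Lemma \ref{L:facial_limit} then produces no curves diverging to $\infty$, so the standing hypothesis on $s(t)$ already rules it out.
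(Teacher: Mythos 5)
Your proof is correct and follows the same overall strategy as the paper: use $g_{a,b,w}(s(t),t)=0$ to isolate the term indexed by $\beta$, bound the coefficient ratios by $C$ and the powers of $t$ by $t^{-d_w}$, and control the linear factors $s a_i-b_i$ via the hypothesis $|s(t)|>2b_{\max}/a_{\min}$. The one substantive difference is in how those factors are estimated, and it works in your favor. The paper first factors out $s(t)$ and bounds $a_{\min}/2\le |a_i-b_i\,s(t)^{-1}|\le a_{\max}+b_{\max}$, which, read literally, yields the denominator $2(a_{\max}+b_{\max})$ in the final constant --- slightly weaker than the stated $2(a_{\max}+a_{\min})$, since $b_{\max}\ge 1\ge a_{\min}$. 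You instead use $|b_i|\le b_{\max}<|s|a_{\min}/2$ to get $|s|\,|a_i|/2\le |s a_i-b_i|\le |s|\,(|a_i|+a_{\min}/2)$, and this bookkeeping produces exactly the constant appearing in the theorem. Two minor caveats, both shared with the paper's own argument: the inequality $t^{w\cdot\alpha-h(w)}\le t^{-d_w}$ requires $t\ge 1$ (your assertion ``$t_1\ge 1$'' is not among the hypotheses, but the paper makes the same implicit assumption and the bound is only of interest for large $t$); and your dismissal of the borderline case $|\beta|=d$ agrees with the paper's observation that $s(t)\to\infty$ forces $d-|\beta|>0$ --- in any event your derivation of the displayed inequality never actually uses $d>|\beta|$.
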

%%%%%%%%%%%%%%%%%%%%%%%%%%%%%%%%%%%%%%%%%%%%%%%%%%%%%%%%%%%%%%%%%%%%%%%%%%%%%%

%%%%%%%%%%%%%%%%%%%%%%%%%%%%%%%%%%%%%%%%%%%%%%%%%%%%%%%%%%%%%%%%%%%%%%%%%%%%%%
\begin{remark}\label{Rem:witness}
 Theorems~\ref{thm:bounded} and~\ref{thm:unbounded} lead to an algorithm to determine
 vertices of $\sN(\sH)$.
 First, choose $a,b\in\bC^n$ as above and compute $\gamma_i$, $b_{\max}$, and $a_{\min}$.
 For a general $w\in\bR^n$, follow points in the witness set $\sH\cap(t^w.\ell_{a,b}(s))$
 as $t$ increases until the inequalities~\eqref{Eq:first_estimate}
 and~\eqref{Eq:second_estimate} are satisfied by the different points of the witness sets,
 at some $t_1$.
 This will give likely values for the integer components of the vertex $\beta$ exposed by $w$.
 Next, continue following these points until the subexponential convergence
 in~\eqref{Eq:first_subexponential} and~\eqref{Eq:second_subexponential} is observed,
 which will confirm the value of $\beta$.

 If we do not observe clustering of points of the witness set at $s=b_i/a_i$ and
 $s=\infty$, then we discard $w$, as it is not sufficiently general.
 That is, either it exposes a positive dimensional face of $\sN(\sH)$ or else it is very
 close to doing so in that $d_w$ is too small.
   \hfill\qed
\end{remark}
%%%%%%%%%%%%%%%%%%%%%%%%%%%%%%%%%%%%%%%%%%%%%%%%%%%%%%%%%%%%%%%%%%%%%%%%%%%%%%

%%%%%%%%%%%%%%%%%%%%%%%%%%%%%%%%%%%%%%%%%%%%%%%%%%%%%%%%%%%%%%%%%%%%%%%%%%%%%%
\begin{proof}[Proof of Theorem~$\ref{thm:bounded}$]
 Fix $t>t_1$.
 Since $0=g_{a,b,w}(s(t),t)=f(t^w.\ell_{a,b}(s(t)))$ and
 $f_w(x)=c_\beta x^\beta$,~\eqref{Eq:separated} gives
 \begin{eqnarray}
   | (s(t) a  - b)^{\beta}| &\leq &  \nonumber
    \sum_{\alpha\in\sA\smallsetminus\{\beta\}} t^{w\cdot\alpha-w\cdot\beta}\cdot
      \frac{|c_\alpha|}{|c_\beta|}\cdot |(s(t) a - b)^\alpha|\\ \label{Eq:est_1}
   &\leq& t^{-d_w}\cdot C\cdot  \sum_{\alpha\in\sA\smallsetminus\{\beta\}} |(s(t) a -b)^\alpha|\,.
 \end{eqnarray}
  For any $i$ and $j$ we have
 \[
    |s(t) a_j - b_j|\ =\  |a_j|\cdot   |s(t) - \tfrac{b_j}{a_j}|\ \leq\ a_{\max}
      \bigl| s(t)-\tfrac{b_i}{a_i}\ +\ \tfrac{b_i}{a_i}-\tfrac{b_j}{a_j}\bigr|
      \ \leq\ a_{\max}(\gamma_i+\Gamma_i)\,.
 \]
 Since $2\leq a_{\max}\Gamma_i$ and if $\alpha\in\sA$, then $|\alpha|\leq d$, we have
 \begin{equation}\label{Eq:est_3}
    |(s(t) a -b)^\alpha|\ \leq\ \bigl(a_{\max}(\gamma_i+\Gamma_i)\bigr)^d\,.
 \end{equation}
 With~\eqref{Eq:est_1}, this becomes
 \begin{equation}\label{Eq:new_est}
   | (s(t) a - b)^{\beta}| \ \leq\
    t^{-d_w} \cdot C\cdot |\sA| \cdot \bigr(a_{\max}(\gamma_i+\Gamma_i)\bigr)^d\,.
 \end{equation}

 If $j\neq i$, then
 \begin{eqnarray*}
   |s(t) a_j - b_j| \ =\ |a_j|\cdot \bigl|s(t)- \tfrac{b_j}{a_j}\bigr|
    &=& |a_{j}|\cdot \bigl|s(t) - \tfrac{b_i}{a_i} + \tfrac{b_i}{a_i} - \tfrac{b_j}{a_j}\bigr|\\
    &\geq& a_{\min}\cdot \Bigl| \bigl|\tfrac{b_i}{a_i} - \tfrac{b_j}{a_j}\bigr|\ -\
                 \bigl| s(t) - \tfrac{b_i}{a_i} \bigr| \Bigr|\\
    &\geq& a_{\min}\cdot (2\gamma_i\ -\ \gamma_i)\ =\ a_{\min} \gamma_i\,.
 \end{eqnarray*}
 Since $a_{\min}\gamma_i\leq 1$ and $|\beta|\leq d$, we have
 \begin{equation}\label{Eq:est_2}
   \prod_{j\neq i} \bigl|(s(t) a_j - b_j)^{\beta_j}\bigr|
    \ \geq\ (a_{\min} \gamma_i)^{d-\beta_i}\,.
 \end{equation}
 Observe that we have
 \[
     \left| s(t)\ -\ \frac{b_i}{a_i}\right|^{\beta_i}\ =\
     \frac{1}{|a_i|^{\beta_i}}\cdot|s(t) a_i - b_i|^{\beta_i}\ =\
     \frac{1}{|a_i|^{\beta_i}} \cdot
     \frac{|(s(t)a-b)^\beta|}{\prod_{j\neq i} \bigl|(s(t)a_j-b_j)^{\beta_j}\bigr|}\,.
 \]
 Combining this with~\eqref{Eq:new_est} and~\eqref{Eq:est_2} gives
 \[
   \left| s(t)\ -\ \frac{b_i}{a_i}\right|^{\beta_i}\ \leq\
    t^{-d_w}\cdot C\cdot|\sA|\cdot \bigl(a_{\max}(\gamma_i+\Gamma_i)\bigr)^d
    \cdot\frac{1}{a_{\min}^{\beta_i}}\cdot
    \frac{1}{(a_{\min} \gamma_i)^{d-\beta_i}}\ .
 \]

 Since $1\geq a_{\min}\geq\gamma_i$ and $d\geq\beta_i\geq 0$, we have
 \[
   \left| s(t)\ -\ \frac{b_i}{a_i}\right|^{\beta_i}\ \leq\
    t^{-d_w}\cdot C\cdot |\sA|\cdot
    \left(\frac{a_{\max}}{a_{\min}}\left(1 + \frac{\Gamma_i}{\gamma_i}\right)\right)^d\ ,
 \]
 which completes the proof.
\end{proof}
%%%%%%%%%%%%%%%%%%%%%%%%%%%%%%%%%%%%%%%%%%%%%%%%%%%%%%%%%%%%%%%%%%%%%%%%%%%%%%

%%%%%%%%%%%%%%%%%%%%%%%%%%%%%%%%%%%%%%%%%%%%%%%%%%%%%%%%%%%%%%%%%%%%%%%%%%%%%%
\begin{proof}[Proof of Theorem~$\ref{thm:unbounded}$]
 Fix $t>t_1$.
 Then $|s(t)|>2$.
 Since $g_{a,b,w}(s(t),t)=0$, we have
\[
   | c_\beta t^{w\cdot\beta} (s(t)a-b)^\beta|\ =\
   \Bigl| \sum_{\alpha\in\sA\smallsetminus\{\beta\}}
        c_\alpha t^{w\cdot\alpha}(s(t)a-b)^\alpha\Bigr|
  \ \leq\ \sum_{\alpha\in\sA\smallsetminus\{\beta\}}
         |c_\alpha t^{w\cdot\alpha}(s(t)a-b)^\alpha|\,.
\]
 Factoring out powers of $|s(t)|$, we obtain
 \[
     t^{w\cdot\beta} |s(t)|^{|\beta|}|c_\beta|\, |(a-b\,s(t)^{-1})^\beta| \ \leq\
    \sum_{\alpha\in\sA\smallsetminus\{\beta\}} t^{w\cdot\alpha} |s(t)|^{|\alpha|}
     |c_\alpha|\,|(a-b\,s(t)^{-1})^\alpha|\,.
\]
 Since $s(t)\to\infty$ as $t\to\infty$, we must have $d-|\beta|>0$.
 Dividing by most of the left hand side and by $|s(t)|^d$ and using the definition of $d_w$, we
 obtain
 \begin{eqnarray}
  |s(t)|^{|\beta|-d}& \leq&  \nonumber
    \sum_{\alpha\in\sA\smallsetminus\{\beta\}} t^{w\cdot\alpha-w\cdot\beta}
      |s(t)|^{|\alpha|-d}
      \frac{|c_\alpha|}{|c_\beta|}\,
      \frac{|(a-b\,s(t)^{-1})^\alpha|}{|(a-b\,s(t)^{-1})^\beta|}\\ \label{Eq:Need_this}
   &\leq& t^{-d_w}\cdot C\cdot
          \sum_{\alpha\in\sA\smallsetminus\{\beta\}}  |s(t)|^{|\alpha|-d}
      \frac{|(a-b\,s(t)^{-1})^\alpha|}{|(a-b\,s(t)^{-1})^\beta|}\ .
 \end{eqnarray}
  We estimate the terms in this last sum.
  As $|s(t)|\geq 2$, for any $i$ we have
\[
   |a_i-b_i\,s(t)^{-1}|\ \leq\ |a_i| + |b_i\, s(t)^{-1}|
   \ \leq\ a_{\max} + b_{\max}\,,
\]
 and so $|(a-b\,s(t)^{-1})^\alpha|\leq (a_{\max}+b_{\max})^{|\alpha|}$.
 Similarly, for any $i$ we have
 \[
   |a_i-b_i\,s(t)^{-1}|\ \geq\ |a_i|-|b_i\,s(t)^{-1}| \ \geq\
    a_{\min}\ -\ b_{\max}\cdot\frac{a_{\min}}{2 b_{\max}}\ =\ \frac{a_{\min}}{2}\ .
 \]
 Thus
 \[
    \frac{|(a-b\,s(t)^{-1})^\alpha|}{|(a-b\,s(t)^{-1})^\beta|}\ \leq\
    (a_{\max}+b_{\max})^{|\alpha|} \bigl(\frac{2}{a_{\min}}\bigr)^{|\beta|}
     \ <\ \Bigr(\frac{2(a_{\max}+b_{\max})}{a_{\min}}\Bigr)^d\,.
 \]
 Substituting this into~\eqref{Eq:Need_this} completes the proof of the theorem.
\end{proof}
%%%%%%%%%%%%%%%%%%%%%%%%%%%%%%%%%%%%%%%%%%%%%%%%%%%%%%%%%%%%%%%%%%%%%%%%%%%%%%

\begin{example}
We demonstrate the convergence and divergence bounds by considering the polynomial $f(x,y) = x^2+3x+2y-5$
with the hypersurface $\sH:=\Var(f)$ it defines.
We have $\sA = \{1,x,y,x^2\}$ with $|\sA| = 4$ and will take $C = 5$, $a = (2+\sqrt{-1},3-2\sqrt{-1})$, $b = (-1-\sqrt{-1},2-3\sqrt{-1})$,
$a_{\min} = 1$, $a_{\max} = \sqrt{13}$, $b_{\min} = 1$, and $b_{\max} = \sqrt{13}$.
Additionally, $\gamma_i = \Gamma_i \approx 1.5342$ for $i = 1,2$.

First, consider the vector $w = (1,1)$ for which $\sN(\sH)_w = (2,0)$ and $d_w = 1$.
We have $g_{a,b}(s,t)=f(t\cdot(sa_1-b_1),t\cdot(sa_2-b_2))$, and $g_{a,b}(s,t)=0$
has two nonsingular solutions for all $t > 0$.  Since $\sN(\sH)_w = (2,0)$ both
solutions paths converge to $b_1/a_1$ as $t\rightarrow\infty$.
The following table compares the actual values for the two solution paths, $s_1(t)$ and
$s_2(t)$, with the upper bound (\ref{Eq:first_subexponential}) in Theorem~\ref{thm:bounded}.
In particular, this table shows $|s_i(t) - b_1/a_1|^2 \approx 2.2 t^{-1}$
whereas the upper bound is $1040 t^{-1}$.

\smallskip
%%%%%%%%%%%%%%%%%%%%%%%%%%%%%%%%%%%%%%%%%%%%%%%%%%%%%%%%%%%%%%%%%%%
\[
  \begin{tabular}{|c|c|c|c|}
    \hline
    $t$ & $|s_1(t) - b_1/a_1|^2$ & $|s_2(t) - b_1/a_1|^2$ & Upper bound (\ref{Eq:first_subexponential}) \\

    \hline
    $1e2$ & 0.26 & 0.19 & 10.4 \\
    \hline
    $1e4$ & 2.2e-4 & 2.2e-4 & 0.104 \\
    \hline
    $1e6$ & 2.2e-6 & 2.2e-6 & 1.04e-3 \\
    \hline
    $1e8$ & 2.2e-8 & 2.2e-8 & 1.04e-5 \\
    \hline
  \end{tabular}
\]
%%%%%%%%%%%%%%%%%%%%%%%%%%%%%%%%%%%%%%%%%%%%%%%%%%%%%%%%%%%%%%%%%%%
\smallskip

We now consider the vector $w = (-1,-1)$ for which $\sN(\sH)_w = (0,0)$ and $d_w = 2$.
With the same $a,b$ as above, $g_{a,b}(s,t)=f(t^{-1}\cdot(sa_1-b_1), t^{-1}\cdot(sa_2-b_2))$
and $g_{a,b}(s,t)=0$ has two nonsingular solutions for all $t > 0$.
Since $\sN(\sH)_w = (0,0)$, both solution paths diverge to $\infty$ as $t\to\infty$.
The following table compares the actual values for the two solution paths, $s_1(t)$ and $s_2(t)$,
and the lower bound (\ref{Eq:second_subexponential}) in Theorem~\ref{thm:unbounded}.
This table shows $|s_i(t)|^2 \approx t^2/8.71$ whereas the lower bound is $t^2/4160$.

\smallskip
%%%%%%%%%%%%%%%%%%%%%%%%%%%%%%%%%%%%%%%%%%%%%%%%%%%%%%%%%%%%%%%%%%%
\[
  \begin{tabular}{|c|c|c|c|}
    \hline
    $t$ & $|s_1(t)|^2$ & $|s_2(t)|^2$ & Lower bound (\ref{Eq:second_subexponential})  \\

    \hline
    $1e2$ & 1.17e3 & 1.13e3  & 2.40 \\
    \hline
    $1e4$ & 1.15e7 & 1.15e7 & 2.40e4 \\
    \hline
    $1e6$ & 1.15e11 & 1.15e11 & 2.40e8 \\
    \hline
    $1e8$ & 1.15e15 & 1.15e15 & 2.40e12 \\
    \hline
  \end{tabular}
\]
%%%%%%%%%%%%%%%%%%%%%%%%%%%%%%%%%%%%%%%%%%%%%%%%%%%%%%%%%%%%%%%%%%%
\hfill\qed
\end{example}

%%%%%%%%%%%%%%%%%%%%%%%%%%%%%%%%%%%%%%%%%%%%%%%%%%%%%%%%%%%%%%%%%%%%%%%%%%%%
%
\section{Even L\"uroth quartics}\label{Sec:Examples}

Associating a plane quartic curve to a defining equation identifies the set of plane
quartics with $\bP^{14}$.
This projective space has an interesting L\"uroth hypersurface whose general point is a
\demph{L\"uroth quartic}, which is a quartic that contains the ten vertices of some pentalateral
(arrangement of five lines).
The equation for this hypersurface is the \demph{L\"uroth invariant}, which has degree
54~\cite{Mor19} and is
invariant under the induced action of $PGL(3)$ on $\bP(S^4 \bC^3)\simeq\bP^{14}$.
A discussion of this remarkable hypersurface, with references, is given
in~\cite[Remark 6.3.31]{Dolgachev}.

We use the algorithm of Section~\ref{Sec:NPwitness} to investigate the \demph{L\"uroth polytope},
the Newton polytope of the L\"uroth invariant.
While we are not yet able to compute the full L\"uroth polytope, we can compute some of
its vertices, including all those on a particular three-dimensional face.
This face is the Newton polytope of the L\"uroth hypersurface in the five-dimensional
family of \demph{even quartics} whose monomials are squares,
 \begin{equation*}\label{Eq:even}
   \sE\ :=\
    \{ q_{400}x^4+q_{040}y^4+q_{004}z^4+2q_{220}x^2y^2 + 2q_{202}x^2z^2+2q_{022}y^2z^2\::\:
     [q_{400},\dotsc,q_{022}]\in\bP^5\}\,.
 \end{equation*}
(Note the coefficients of 2 on the last three terms.
 This scaling tempers the coefficients in the equation $f_5$ in Figure~\ref{F:polys} for the
 even L\"uroth quartics.)
We show that this Newton polytope is a bipyramid that is affinely isomorphic to
 \begin{equation}\label{Eq:bipyramid}
   \conv\left\{\three{0}{0}{0},\three{1}{0}{0},\three{0}{1}{0},\three{0}{0}{1},
                \three{1}{1}{1}\right\}
       \quad = \quad
%   \raisebox{-27pt}{\includegraphics[height=60pt]{bipyramid.pdf}}
   \raisebox{-27pt}{\includegraphics[height=60pt]{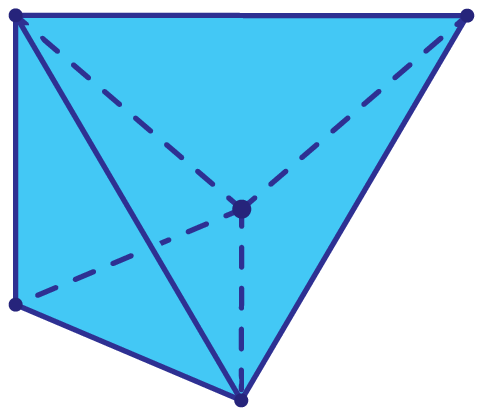}}
 \end{equation}
We will furthermore use the numerical interpolation method of \cite{BHMPS} to compute
the equation for the hypersurface in $\sE$ of even L\"uroth quartics.\smallskip

If  $\ell_1,\dotsc,\ell_5$ are general linear forms on $\bP^2$, then the quartic with
equation
 \begin{equation}\label{Eq:Lueroth}
    \ell_1\ell_2\ell_3\ell_4\ell_5\cdot
    (\tfrac{1}{\ell_1}+\tfrac{1}{\ell_2}+\tfrac{1}{\ell_3}
    +\tfrac{1}{\ell_4}+\tfrac{1}{\ell_5})\ =\ 0\,
 \end{equation}
contains the ten points of pairwise intersection of the five lines defined by
$\ell_1,\dotsc,\ell_5$.
Counting constants suggests that there is a 14-dimensional family of such quartics, but
L\"uroth showed~\cite{Lu69} that the set of such quartics forms a hypersurface in $\bP^{14}$.

The formula~\eqref{Eq:Lueroth} exhibits the \demph{L\"uroth hypersurface $\sLH$} as the
closure of the intersection of a general affine hyperplane $\sM\subset\bC^{15}$ with the
image of the map
 \begin{equation}\label{Eq:g}
  \begin{array}{rcl}
     g\ \colon\ (\bC^3)^5 &\longrightarrow&\bC^{15}\\ \rule{0pt}{15pt}
     (\ell_1,\dotsc,\ell_5)&\longmapsto&{\displaystyle \prod_{i=1}^5\ell_i \cdot \sum_{i=1}^5 \tfrac{1}{\ell_i}}
  \end{array}
 \end{equation}

% \begin{eqnarray*}
%   g\ \colon\ (\bC^3)^5=\bC^{15}& \longrightarrow& \bC^{15} \\
%     (\ell_1,\dotsc,\ell_5)&\longmapsto& \prod_{i=1}^5\ell_i \cdot \sum_{i=1}^5 \tfrac{1}{\ell_i}
% \end{eqnarray*}
%
The codimension of $\sLH$ and the dimension of the general fiber (both $1$)
are easily verified using this parameterization \cite[Lemma 3]{WitnessProj}.
In particular, we used the method of~\cite{WitnessProj} described in \S~\ref{Sec:WitnessSets}
with Bertini \cite{BHSW06} to compute a witness set for $\sLH=\overline{g(\bC^{15})\cap\sM}$.
This witness set verifies that the degree of $\sLH$ is $54$.
As shown in \cite{WitnessMemb}, this witness set
also provides the ability to test membership in $\sLH$ by tracking at most $54$ paths.

The space $\bC^{15}$ of quartic polynomials has coordinates given by the coefficients of the
monomials in a quartic,
 \begin{eqnarray*}
   \begin{picture}(80,10)\put(0,0){${\displaystyle\sum_{i+j+k=4} q_{ijk}x^iy^jz^k}$}\end{picture}
    \ &=& q_{400}x^4+q_{310}x^3y+q_{301}x^3z+q_{220}x^2y^2+q_{211}x^2yz\\
       &&+q_{202}x^2z^2+q_{130}xy^3+q_{121}xy^2z+q_{112}xyz^2+q_{103}xz^3\\
    &&+q_{040}y^2+q_{031}y^3z+q_{022}y^2z^2+q_{013}yz^3+q_{004}z^4\,.
 \end{eqnarray*}
In theory, we may use the algorithm of Remark~\ref{Rem:witness} to determine the Newton
polytope of~$\sLH$.  While difficult in practice, we may compute some vertices.
For example,
\[
   q_{400}^6q_{301}^6q_{121}^{30}q_{013}^{12}\ \leftrightarrow\
   (6,0,6,0,0,0,0,30,0,0,0,0,0,12,0)\,,
\]
is the extreme monomial in the direction
\[
  (3,-5,3,2,3,-2,-1,4,-3,-2,3,1,-5,3,-5)\,.
\]
By symmetry, this gives five other vertices,
\[
   q_{400}^6q_{310}^6q_{112}^{30}q_{031}^{12}\,,\,
   q_{040}^6q_{031}^6q_{211}^{30}q_{103}^{12}\,,\,
   q_{040}^6q_{130}^6q_{112}^{30}q_{301}^{12}\,,\,
   q_{004}^6q_{013}^6q_{211}^{30}q_{130}^{12}\,,\,
   q_{004}^6q_{103}^6q_{121}^{30}q_{310}^{12}\,.
\]

It is dramatically more feasible to compute the Newton polytope of the
hypersurface of L\"uroth quartics in the space $\sE$ of even quartics.
This is the face of the L\"uroth polytope that is extreme in the direction of
$v$, where
\[
   v\cdot(q_{400},q_{310},\dotsc,q_{004})\ =\
   -\sum\{ q_{ijk}\mid \mbox{one of $i$, $j$, $k$ is odd}\}\,.
\]
Obtaining a witness set for the even L\"uroth quartics, $\defcolor{\sEH}:=\sE\cap\sLH$,  is
straightforward; we reparameterize using the $2$'s in the definition of $\sE$ and
include the linear equations
\[
   q_{ijk}\ =\ 0\qquad\mbox{where one of $i$, $j$, $k$ is odd}
\]
among the affine linear equations $\sL\colon\bC^{15}\to\bC^{13}$ used for the witness set
computation.  When performing this specialization, some of the 54 points from $\sLH$
coalesce.  More precisely, six points of $\sEH$
arise as the coalescence of four points each, nine points of $\sEH$ arise
the coalescence of two points each, and the remaining twelve points
remain distinct.  This implies that $\sEH$ is reducible with non-reduced components.

Numerical irreducible decomposition shows that $\sEH$ consists of eight components, only one
of which is reduced.
However, as we are using witness sets for images of maps~\cite{WitnessProj} (as described
in \S~\ref{Sec:WitnessSets}) the numerical computations are not performed in $\sE$, but
rather on the smooth incidence variety of the map $g$~\eqref{Eq:g}.

We first determine the Newton polytope of each component and then
use interpolation~\cite{BHMPS} to recover the defining
equation for each component.
For $f_1,\dotsc,f_5$ as given in Figure~\ref{F:polys}, $\sEH$ is defined~by
 \begin{equation}\label{E:EH}
    q_{400}^4\cdot q_{040}^4\cdot q_{004}^4\cdot f_1^4\cdot f_2^2\cdot f_3^2\cdot f_4^2\cdot f_5 \ = 0\,.
 \end{equation}
%
%%%%%%%%%%%%%%%%%%%%%%%%%%%%%%%%%%%%%%%%%%%%%%%%%%%%%%%%%%%%%%%%%%%%%%%%%%%%%%%%
\begin{figure}[htb]
 \begin{eqnarray*}
f_1 &=& q_{400} q_{040} q_{004} - q_{400} q_{022}^2 - q_{040} q_{202}^2 -  q_{004} q_{220}^2 - 2 q_{220} q_{202} q_{022} \\
f_2 &=& q_{400} q_{040} q_{004} - q_{400} q_{022}^2 + 3 q_{040} q_{202}^2 - q_{004} q_{220}^2 + 2 q_{220} q_{202} q_{022} \\
f_3 &=& q_{400} q_{040} q_{004} + 3 q_{400} q_{022}^2 - q_{040} q_{202}^2 - q_{004} q_{220}^2 + 2 q_{220} q_{202} q_{022} \\
f_4 &=& q_{400} q_{040} q_{004} - q_{400} q_{022}^2 - q_{040} q_{202}^2 + 3 q_{004} q_{220}^2 + 2 q_{220} q_{202} q_{022} \\
f_5 &=&
2401 q_{400}^4 q_{040}^4 q_{004}^4
-196 q_{400}^4 q_{040}^3 q_{004}^3 q_{022}^2
+102 q_{400}^4 q_{040}^2 q_{004}^2 q_{022}^4
-4 q_{400}^4 q_{040} q_{004} q_{022}^6\\
&&
+~q_{400}^4 q_{022}^8
-196 q_{400}^3 q_{040}^4 q_{004}^3 q_{202}^2
-196 q_{400}^3 q_{040}^3 q_{004}^4 q_{220}^2
+840 q_{400}^3 q_{040}^3 q_{004}^3 q_{220} q_{202} q_{022}\\
&&
-~820 q_{400}^3 q_{040}^3 q_{004}^2 q_{202}^2 q_{022}^2
-820 q_{400}^3 q_{040}^2 q_{004}^3 q_{220}^2 q_{022}^2
+232 q_{400}^3 q_{040}^2 q_{004}^2 q_{220} q_{202} q_{022}^3\\
&&
-~12 q_{400}^3 q_{040}^2 q_{004} q_{202}^2 q_{022}^4
-12 q_{400}^3 q_{040} q_{004}^2 q_{220}^2 q_{022}^4
-40 q_{400}^3 q_{040} q_{004} q_{220} q_{202} q_{022}^5\\
&&
+~4 q_{400}^3 q_{040} q_{202}^2 q_{022}^6
+4 q_{400}^3 q_{004} q_{220}^2 q_{022}^6
-8 q_{400}^3 q_{220} q_{202} q_{022}^7
+102 q_{400}^2 q_{040}^4 q_{004}^2 q_{202}^4\\
&&
-~820 q_{400}^2 q_{040}^3 q_{004}^3 q_{220}^2 q_{202}^2
+232 q_{400}^2 q_{040}^3 q_{004}^2 q_{220} q_{202}^3 q_{022}
-12 q_{400}^2 q_{040}^3 q_{004} q_{202}^4 q_{022}^2\\
&&
+~102 q_{400}^2 q_{040}^2 q_{004}^4 q_{220}^4
+232 q_{400}^2 q_{040}^2 q_{004}^3 q_{220}^3 q_{202} q_{022}
+128 q_{400}^2 q_{040}^2 q_{004}^2 q_{220}^2 q_{202}^2 q_{022}^2\\
&&
-~80 q_{400}^2 q_{040}^2 q_{004} q_{220} q_{202}^3 q_{022}^3
+6 q_{400}^2 q_{040}^2 q_{202}^4 q_{022}^4
-12 q_{400}^2 q_{040} q_{004}^3 q_{220}^4 q_{022}^2\\
&&
-~80 q_{400}^2 q_{040} q_{004}^2 q_{220}^3 q_{202} q_{022}^3
+220 q_{400}^2 q_{040} q_{004} q_{220}^2 q_{202}^2 q_{022}^4
-24 q_{400}^2 q_{040} q_{220} q_{202}^3 q_{022}^5\\
&&
+~6 q_{400}^2 q_{004}^2 q_{220}^4 q_{022}^4
-24 q_{400}^2 q_{004} q_{220}^3 q_{202} q_{022}^5
+24 q_{400}^2 q_{220}^2 q_{202}^2 q_{022}^6\\
&&
-~4 q_{400} q_{040}^4 q_{004} q_{202}^6
-12 q_{400} q_{040}^3 q_{004}^2 q_{220}^2 q_{202}^4
-40 q_{400} q_{040}^3 q_{004} q_{220} q_{202}^5 q_{022}\\
&&
+~4 q_{400} q_{040}^3 q_{202}^6 q_{022}^2
-12 q_{400} q_{040}^2 q_{004}^3 q_{220}^4 q_{202}^2
-80 q_{400} q_{040}^2 q_{004}^2 q_{220}^3 q_{202}^3 q_{022}\\
&&
+~220 q_{400} q_{040}^2 q_{004} q_{220}^2 q_{202}^4 q_{022}^2
-24 q_{400} q_{040}^2 q_{220} q_{202}^5 q_{022}^3
-4 q_{400} q_{040} q_{004}^4 q_{220}^6\\
&&
-~40 q_{400} q_{040} q_{004}^3 q_{220}^5 q_{202} q_{022}
+220 q_{400} q_{040} q_{004}^2 q_{220}^4 q_{202}^2 q_{022}^2
-272 q_{400} q_{040} q_{004} q_{220}^3 q_{202}^3 q_{022}^3\\
&&
+~48 q_{400} q_{040} q_{220}^2 q_{202}^4 q_{022}^4
+4 q_{400} q_{004}^3 q_{220}^6 q_{022}^2
-24 q_{400} q_{004}^2 q_{220}^5 q_{202} q_{022}^3\\
&&
+~48 q_{400} q_{004} q_{220}^4 q_{202}^2 q_{022}^4
-32 q_{400} q_{220}^3 q_{202}^3 q_{022}^5
+q_{040}^4 q_{202}^8
+4 q_{040}^3 q_{004} q_{220}^2 q_{202}^6\\
&&
-~8 q_{040}^3 q_{220} q_{202}^7 q_{022}
+6 q_{040}^2 q_{004}^2 q_{220}^4 q_{202}^4
-24 q_{040}^2 q_{004} q_{220}^3 q_{202}^5 q_{022}
+24 q_{040}^2 q_{220}^2 q_{202}^6 q_{022}^2\\
&&
+~4 q_{040} q_{004}^3 q_{220}^6 q_{202}^2
-24 q_{040} q_{004}^2 q_{220}^5 q_{202}^3 q_{022}
+48 q_{040} q_{004} q_{220}^4 q_{202}^4 q_{022}^2\\
&&
-32~q_{040} q_{220}^3 q_{202}^5 q_{022}^3
+q_{004}^4 q_{220}^8
-8 q_{004}^3 q_{220}^7 q_{202} q_{022}
+24 q_{004}^2 q_{220}^6 q_{202}^2 q_{022}^2\\
&&
-~32 q_{004} q_{220}^5 q_{202}^3 q_{022}^3
+16 q_{220}^4 q_{202}^4 q_{022}^4\,.\vspace{-5pt}
\end{eqnarray*}
\caption{Polynomials defining $\sEH$}\label{F:polys}
\end{figure}
%%%%%%%%%%%%%%%%%%%%%%%%%%%%%%%%%%%%%%%%%%%%%%%%%%%%%%%%%%%%%%%%%%%%%%%%%%%%%%%%
For completeness, we used the algorithm of~\cite{WitnessMemb} to verify that a random
element of each hypersurface $\Var(f_i)$ lies on~$\sLH$.

Observe that $f_1,f_2,f_3$, and $f_4$ all have the same support and therefore the same Newton
polytope,~$\Delta$.   Every integer point of $\Delta$ corresponds to a monomial
in these polynomials and all are extreme.
The Newton polytope of $f_5$ is $4\Delta$ and it has 65 nonzero terms, which correspond to
all the integer points in $4\Delta$.
Thus the Newton polytope of $\sEH$ is $14\Delta+\alpha$, where $\alpha$ is the exponent
vector of $q_{400}^4q_{040}^4q_{004}^4$.
To complete the identification of $\sN(\sEH)$,
consider the integer points $\{O,A,B,C,D\}$ of $\Delta$, which are on the left in
Table~\ref{T:one}
%%%%%%%%%%%%%%%%%%%%%%%%%%%%%%%%%%%%%%%%%%%%%%%%%%%%%%%%%%%%%%%%%%%%%%%%%%%
\begin{table}[htb]
\caption{Vertices of $\Delta$}\label{T:one}
$%\[
   \begin{array}{ccccccc}
     &q_{400}&q_{040}&q_{004}&q_{022}&q_{202}&q_{220}\\
    O&0&0&0&1&1&1\\
    A&1&0&0&2&0&0\\
    B&0&1&0&0&2&0\\
    C&0&0&1&0&0&2\\
    D&1&1&1&0&0&0\end{array}
   \qquad
   \begin{array}{rrrrrrr}
     &q_{400}&q_{040}&q_{004}&q_{022}&q_{202}&q_{220}\\
    o&0&0&0&0&0&0\\
    a&1&0&0&1&-1&-1\\
    b&0&1&0&-1&1&-1\\
    c&0&0&1&-1&-1&1\\
    d&1&1&1&-1&-1&-1\end{array}
$%\]
\end{table}
%%%%%%%%%%%%%%%%%%%%%%%%%%%%%%%%%%%%%%%%%%%%%%%%%%%%%%%%%%%%%%%%%%%%%%%%%%%%%%%%
Replacing $\{O,\dotsc,D\}$ by their differences with $O$ gives the points $o,a,b,c,d$ on
the right in Table~\ref{T:one}.
Note that $a+b+c=d$.  Projecting to the first three coordinates is an isomorphism of the
integer span of $a,b,c$ with $\bZ^3$, and shows that $\Delta$ is affinely isomorphic to
the bipyramid~\eqref{Eq:bipyramid}
%\[
%   \conv\left\{\three{0}{0}{0},\three{1}{0}{0},\three{0}{1}{0},\three{0}{0}{1},
%                \three{1}{1}{1}\right\}
%       \quad = \quad
%   \raisebox{-22pt}{\includegraphics[height=50pt]{bipyramid.pdf}}
%   \raisebox{-27pt}{\includegraphics[height=50pt]{bipyramid.eps}}
%\]
%This completes the claim that the face of the L\"uroth hypersurface corresponding to
%$\sEH$ is affinely isomorphic to the bipyramid~\eqref{Eq:bipyramid}.

Using \eqref{E:EH}, we can determine which Edge quartics \cite{Edge38,PSV11} are L\"uroth quartics
since the family of Edge quartics $\sED$ is contained in $\sE$ with
 \[
    \sED\ :=\ \{\Var(s(x^4+y^4+z^4)-t(y^2z^2+x^2z^2+x^2y^2)) \::\: [s,t]\in\bP^1\}\,.
 \]
Identifying $\sED$ with $\bP^1$, and evaluating at~\eqref{E:EH} gives the equation for
$\sED\cap\sLH$,
 \[
    s^{12}(s+t)^4(2s-t)^{16}(7s+t)(2s^2+st+t^2)^6(28s^3+8s^2t+3st^2+t^3)^3\ =\ 0\,.
%
%                          4         16   2         6           3      2            3
%                   (t + 1)  (-2 + t)   (t  + t + 2)  (t + 7) (t  + 3 t  + 8 t + 28)
%
 \]
%
%    I re-computed this so that I could get the multiplicities, and as a check on the data
%
Set $\omega := \sqrt[3]{297 + 24 \sqrt{159}}$.
Besides the point $[0,1]$, the eight points $[1,t]$ corresponding to
Edge quartics that are L\"uroth quartics are
\begin{eqnarray*}
t_1 &=& -1\,,\\
t_2 &=& 2\,,\\
t_3 &=& -7\,,\\
t_4 &=& \frac{1}{2}(\sqrt{-7} - 1)\,,\\
t_5 &=& \frac{-1}{2}(1 + \sqrt{-7})\,,\\
t_6 &=& \frac{1}{3\omega}(15 - 3 \omega - \omega^2)\,, \\
t_7 &=& \frac{1}{6\omega}(\omega^2 - 6\omega - 15 + \sqrt{-3}~(\omega^2 + 15))\,,\\
t_8 &=& \frac{1}{6\omega}(\omega^2 - 6\omega - 15 - \sqrt{-3}~(\omega^2 + 15))\,.
\end{eqnarray*}

In particular, there are four real values $t_1,t_2,t_3,t_6$ and four nonreal values $t_4,t_5,t_7,t_8$.
The Edge L\"uroth quartic corresponding to $[0,1]$ has three real points, each of which is singular.
Also, except for $t=t_2=2$, which is the union of four lines
\[
    x - y + z\ =\ x - y - z \ =\ x + y - z\ =\ x + y + z\ =\ 0\,,
\]
the Edge L\"uroth quartic corresponding to $[1,t_i]$ is smooth with no real points.

% Frank: When I tried Macaulay2, it failed to terminate on each of these high degree ideal membership verifications.

%%%%%%%%%%%%%%%%%%%%%%%%%%%%%%%%%%%%%%%%%%%%%%%%%%%%%%%%%%%%%%%%%%%%%%%%%%%%%%%%%
%
\section{Conclusion}\label{Sec:Conclusion}
We presented two algorithms for computing the Newton polytope of a hypersurface $\sH$
given numerically.
The first assumes that we may evaluate a polynomial defining $\sH$ while the second uses a
witness set representation of $\sH$.
The second is illustrated through the determination of the polynomial defining the
hypersurface of even L\"uroth quartics (which gives a face of the L\"uroth poytope), along
with some other vertices of the L\"uroth polytope.
Implementing these algorithms remains a future project.

%%%%%%%%%%%%%%%%%%%%%%%%%%%%%%%%%%%%%%%%%%%%%%%%%%%%%%%%%%%%%%%%%%%%%%%%%%%%%%%%%
%
\section*{Acknowledgments} The authors would like to thank
Institut Mittag-Leffler (Djursholm, Sweden) for support and hospitality,
and Bernd Sturmfels for his questions during this program regarding the
numerical computation of Newton polytopes.

%%%%%%%%%%%%%%%%%%%%%%%%%%%%%%%%%%%%%%%%%%%%%%%%%%%%%%%%%%%%%%%%%%%%%%%%%%

\bibliographystyle{amsplain}
\bibliography{bibl}

\end{document}